\author{Ziyou Wu, Steven L. Brunton, \& Shai Revzen}
\newcommand{\ShortTitle}{Challenges in Dynamic Mode Decomposition}
\journalname{\ShortTitle}
\title{\ShortTitle}
\newcommand{\concept}[1]{\textit{#1}}
\newcommand{\R}{\mathbb{R}}
\newcommand{\Cont}{\mathcal{C}}
\newcommand{\C}{\mathbb{C}}
\newcommand{\N}{\mathbb{N}}
\newcommand{\opK}{\mathcal{K}}
\newcommand{\T}{\mathsf{T}}
\newcommand{\M}{\mathbf{M}}
\newcommand{\sX}{\mathrm{\mathbf{X}}}
\newcommand{\Deriv}{\mathrm{\mathbf{D}}}
\newcommand{\Aut}{\text{Aut}}
\newcommand{\e}{\mathrm{e}}
\newcommand{\XMat}{\tilde{\Lambda}}
\newcommand{\Xeig}[1]{\lambda_{[{#1}]}}
\newcommand{\refFig}[1]{figure~\ref{fig:#1}}
\newcommand{\refSec}[1]{\S\ref{sec:#1}}
\newcommand{\refEqn}[1]{eqn.~\ref{eqn:#1}}
\newcommand{\packList}{\setlength{\parsep}{0pt}\setlength{\parskip}{0pt}\setlength{\itemsep}{0pt}}
\newtheorem{theorem}{Theorem}[section]
\newtheorem{lemma}[theorem]{Lemma}
\begin{document}
\maketitle
\tableofcontents

\section{Abstract}
Dynamic Mode Decomposition (DMD) is a powerful tool for extracting spatial and temporal patterns from multi-dimensional time series, and it has been used successfully in a wide range of fields, including fluid mechanics, robotics, and neuroscience.   
Two of the main challenges remaining in DMD research are noise sensitivity and issues related to Krylov space closure when modeling nonlinear systems. 
Here, we investigate the combination of noise and nonlinearity in a controlled setting, by studying a class of systems with linear latent dynamics which are observed via multinomial observables. 
Our numerical models include system and measurement noise.
We explore the influences of dataset metrics, the spectrum of the latent dynamics, the normality of the system matrix, and the geometry of the dynamics. 
Our results show that even for these very mildly nonlinear conditions, DMD methods often fail to recover the spectrum and can have poor predictive ability. 
Our work is motivated by our experience modeling multilegged robot data, where we have encountered great difficulty in reconstructing time series for oscillatory systems with slow transients, which decay only slightly faster than a period. 

\section{Introduction}
The dynamic mode decomposition (DMD) is a dimensionality reduction and modeling approach that was developed in the fluids community~\cite{Schmid2010jfm,schmid2011applications}.  
DMD is a modal decomposition technique~\cite{Taira2017aiaa,Taira2020aiaa} that decomposes a matrix of high-dimensional time series data into a set of spatial coherent structures that exhibit the same linear dynamics in time (e.g., oscillations, exponential growth/decay). 
In this way, the DMD may be thought of as an algorithmic descendent of the principal components analysis (PCA) in space with the Fourier transform in time~\cite{Chen:2012,Tu2014jcd,Kutz2016book,Brunton2019book}.  
Since its introduction, DMD has been rigorously connected to nonlinear dynamical systems~\cite{Rowley2009jfm}, as an approximation of the infinite dimensional Koopman operator~\cite{Koopman1931pnas,Koopman1932pnas}.  
The Koopman operator, introduced in 1931, provides an alternative operator-theoretic perspective for dynamical systems, and it has experienced a resurgence because of its utility for data-driven analysis~\cite{Mezic2004physicad,Mezic2005nd,Budivsic2012chaos,Mezic2013arfm,Bagheri2013jfm,Brunton2021koopman}. 

Since its introduction, DMD has been applied to a wide range of fields beyond fluid mechanics~\cite{schmid2011applications,Towne2018jfm}, including robotics~\cite{Berger2014ieee}, disease modeling~\cite{Proctor2015ih}, neuroscience~\cite{Brunton2014jnm}, finance~\cite{Mann2016qf}, and plasma physics~\cite{Taylor2018rsi,kaptanoglu2020characterizing}, to name only a few~\cite{Kutz2016book}.  
One of the key reasons for the broad success of DMD is its formulation in terms of linear algebra, statistical regression, and optimization, making it relatable to the recent work in machine learning~\cite{Brunton2020arfm} and highly extensible.  
Within a short time, DMD has seen dozens of algorithmic extensions, including for control~\cite{Proctor:2016DMDc,Kaiser2018prsa}, for large and streaming data sets~\cite{Hemati2014pof,Sayadi2016tcfd,Pendergrass2016arxiv,zhang2017online} incorporating ideas of randomized linear algebra~\cite{Bistrian2016ijnme,erichson2019randomized}, for model reduction~\cite{tissot2014model,Noack2016jfm}, for limited measurements~~\cite{Brunton2015jcd,Gueniat2015pof,Bai2017arxiv,Kramer2017siads} and time delays~\cite{Brunton2017natcomm,arbabi2017ergodic}, for multiresolution analysis~\cite{Kutz2016siads}, for selecting a sparse subset of dominant modes~\cite{Jovanovic2014pof}, and for Bayesian formulations~\cite{Takeishi2017JCAI}, among others~\cite{Azencot2019siads,Nair2020prf}.  
The PyDMD~\cite{Demo18pydmd} software package has emerged as an open source tool for several of these extensions.  

Two of the main avenues of DMD research have centered around noise robustness and closure issues related to modeling nonlinear systems.  
It was recognized early that DMD is quite sensitive to noise~\cite{Bagheri2014pof}, and several approaches have been proposed to address this sensitivity, including a forward-backward averaging~\cite{Dawson2016ef}, total least-squares regression~\cite{Hemati2017tcfd}, variable projection~\cite{Askham2018siads}, variational approaches~\cite{Azencot2019siads}, and robust principal component analysis~\cite{Scherl2020prf}. 
However, noise sensitivity is still a leading challenge, and is one that we directly investigate in this paper.  
Similarly, soon after the initial connection between DMD and Koopman theory~\cite{Rowley2009jfm}, there were several follow-up studies  addressing how and when a linear regression model can capture essential features of a nonlinear system.  
The extended DMD algorithm was developed to augment the DMD state with nonlinear observables to enrich the regression~\cite{Williams2015epl,Williams2015jcd,Li2017chaos}, and theoretical convergence results prove that in the infinite data limit, these models converge to the projection of the Koopman operator on this observable subspace~\cite{Korda2017arxiv}.  
However, often the observable subspace is not closed under action of the Koopman operator, so that it does not form an invariant subspace, resulting in poorly predicted dynamics~\cite{Brunton2016plosone}.  

In our own work we have sometimes encountered great difficulty in reconstructing time series with DMD. 
A particularly difficult class seems to be the reconstruction of \concept{intermediate transients} -- transient phenomena that decay faster than the slowest mode, but not much faster.
Here we investigate the possible sources of these difficulties by studying special class of systems for which both the latent linearization and the linearizing observables are completely known -- the class systems with linear latent dynamics which are observed via multinomial observables:
\begin{align}
  \mathbf{x}_{k+1} = \mathbf{A}  \mathbf{x}_k; && \mathbf{y} = P(\mathbf{x}) \label{eqn:gensys}
\end{align}
where $P(\mathbf{x})$ is a (typically low order) multinomial.
However, since we are considering models of physical systems, we will assume both system noise, and measurement noise:
\begin{align}
  \mathbf{x}_{k+1} = \mathbf{A} \mathbf{x}_k + (\text{system noise}); && \mathbf{y} = P(\mathbf{x}) + (\text{measurement noise}) \label{eqn:sys}. 
\end{align}

In this paper, we explore the combination of noise and nonlinearity in this controlled setting, where it is possible to isolate and analyze each effect with incrementally increasing severity.  
Specifically, we explore a particularly benign class of such systems, having 3-dimensional latent dynamics and observables that are monotone polynomials in individual coordinates.
We demonstrate that even for these very mildly nonlinear systems, observed over a subspace closed under the Koopman operator, DMD methods often fail to recover the spectrum and can have poor predictive value.
We also explore the influences of dataset metrics, the spectrum of the latent dynamics, the normality of the system matrix, and the geometry of the dynamics.
Based on results from this self-contained framework, we give generalizable recommendations regarding dataset properties for DMD analysis. 
All code used to generate these results are available as open source at: \url{}.

\section{Background}
Here we will introduce the dynamic mode decomposition (DMD), which is a recent technique for linear system identification.  DMD is closely related to Koopman operator theory, which provides a representation of a dynamical system in terms of the evolution its ``observables''.  We will also introduce Koopman theory, as it relates to performing DMD on data that is augmented with nonlinear observables.  

\subsection{Dynamic mode decomposition (DMD)}
Here we present the \emph{exact DMD} formulation of Tu et al.~\cite{Tu2014jcd}, along with several leading variants to handle noisy data~\cite{Hemati2017tcfd,Dawson2016ef,Askham2018siads}. 
Exact DMD is modified from the original algorithm of Schmid~\cite{Schmid2010jfm}, and both algorithms seek a best-fit linear operator $\mathbf{A}\in\mathbb{R}^{n\times n}$ that evolves a high-dimensional spatial measurement vector $\mathbf{x}\in\mathbb{R}^n$ forward in time:
\begin{align}
    \mathbf{x}_{k+1} = \mathbf{A}\mathbf{x}_k.
\end{align}
More specifically, DMD seeks the leading eigenvalues and eigenvectors of the matrix $\mathbf{A}$.  
The DMD eigenvectors $\boldsymbol{\phi}$, or \emph{modes}, have the dimension of the original data $\mathbf{x}$, and correspond to spatial structures that behave coherently in time according to the corresponding eigenvalue $\lambda$. 

The DMD algorithm begins with time-series data $\{\mathbf{x}_1, \mathbf{x}_2, \cdots , \mathbf{x}_m\}$, which is arranged into two data matrices:
\begin{align}
    \mathbf{X}=\begin{bmatrix}\mathbf{x}_1 & \mathbf{x}_2 & \cdots & \mathbf{x}_{m-1}\end{bmatrix} & & 
    \mathbf{X}' = \begin{bmatrix}\mathbf{x}_2 & \mathbf{x}_3 & \cdots & \mathbf{x}_m\end{bmatrix}.
\end{align}
The best-fit linear operator $\mathbf{A}$ may be solved for in the data equation 
\begin{align}
    \mathbf{X}' \approx \mathbf{A}\mathbf{X}
\end{align}
by a least-squares minimization:
\begin{align}
    \mathbf{A} = \text{argmin}_{\mathbf{A}}\|\mathbf{X}'-\mathbf{A}\mathbf{X}\|_F = \mathbf{X}'\mathbf{X}^\dagger
\end{align}
where $\|\cdot\|_F$ is the Frobenius norm and $\mathbf{X}^\dagger$ is the pseudoinverse of $\mathbf{X}$, computed via the singular value decomposition (SVD):
\begin{align}
    \mathbf{X}=\mathbf{U}\boldsymbol{\Sigma}\mathbf{V}^\T \quad\Longrightarrow\quad \mathbf{X}^\dagger = \mathbf{V}\boldsymbol{\Sigma}^{-1}\mathbf{U}^\T.
\end{align}
In practice, the SVD of $\mathbf{X}$ is typically truncated so that only $r\ll \min({m,n})$ modes are retained: $\mathbf{X}\approx \mathbf{U}_r\boldsymbol{\Sigma}_r\mathbf{V}^\T_r$.  

DMD was introduced for high-dimensional systems, such as fluid dynamics, where the state dimension $n$ may be in the millions.  
In this case, the matrix $\mathbf{A}$ may be intractably large, and instead of constructing $\mathbf{A}$ directly, it's low-dimensional representation $\tilde{\mathbf{A}}$ is analyzed in the $r$-dimensional subspace given by the columns of $\mathbf{U}_r$:
\begin{align}
    \tilde{\mathbf{A}} = \mathbf{U}_r^\T\mathbf{A}\mathbf{U}_r = \mathbf{U}_r^\T\mathbf{X}'\mathbf{V}_r\boldsymbol{\Sigma}_r^{-1}.
\end{align}
The eigenvalues of $\tilde{\mathbf{A}}$ are the same as the eigenvalues of $\mathbf{A}$, and are given by the diagonal elements of the matrix $\boldsymbol{\Lambda}$
\begin{align}
    \tilde{\mathbf{A}}\mathbf{W} = \mathbf{W}\boldsymbol{\Lambda}. 
\end{align}
The corresponding DMD modes, or eigenvectors of the high-dimensional system, are computed from the projected eigenvectors $\mathbf{W}$ as
\begin{align}
    \boldsymbol{\Phi}=\mathbf{X}'\mathbf{V}_r^\T\boldsymbol{\Sigma}_r^{-1}\mathbf{W}.
\end{align}
Tu et al.~\cite{Tu2014jcd} showed that the columns of $\boldsymbol{\Phi}$ are exact eigenvectors of the high-dimensional matrix $\mathbf{A}$. 

Once the DMD eigenvalues and eigenvectors are computed, it is possible to approximate the time series data through the DMD expansion, which is closely related to the Koopman mode decomposition:
\begin{align}
    \mathbf{x}_k \approx \sum_{j=1}^r\boldsymbol{\phi}_j\lambda^{k-1}b_j = \boldsymbol{\Phi\Lambda}^{k-1}\mathbf{b}.
\end{align}
The vector $\mathbf{b}$ contains the amplitudes of each DMD mode, and is often computed as
\begin{align}
    \mathbf{b} = \boldsymbol{\Phi}^\dagger \mathbf{x}_1.
\end{align}
Written in matrix form, the DMD expansion becomes
\begin{align}
    \mathbf{X} \approx 
    \underbrace{\begin{bmatrix}\vline & & \vline \\ \boldsymbol{\phi}_1 & \cdots & \boldsymbol{\phi}_r\\ \vline & & \vline\end{bmatrix}}_{\boldsymbol{\Phi}}
    \underbrace{\begin{bmatrix}b_1 & & \\ & \ddots & \\ & & b_r \end{bmatrix}}_{\text{diag}(\mathbf{b})}
    \underbrace{\begin{bmatrix}1 & \cdots & \lambda_1^{m-2}\\ \vdots & \ddots & \vdots \\ 1 & \cdots & \lambda_r^{m-2} \end{bmatrix}}_{\mathbf{T}(\boldsymbol{\Lambda})}.
\end{align}

The estimation of $\mathbf{b}$ above is based on the first snapshot of data only.  It is possible to improve this estimate with the following optimization over all snapshots:
\begin{align}
    \text{argmin}_{\mathbf{b}}\|\mathbf{X}-\boldsymbol{\Phi}\text{diag}(\mathbf{b})\mathbf{T}(\boldsymbol{\Lambda})\|_F.
\end{align}
Jovanovic et al.~\cite{Jovanovic2014pof} further add a sparsity promoting penalty to make the vector $\mathbf{b}$ as sparse as possible.  

Askham and Kutz~\cite{Askham2018siads} introduced the \emph{optimized} DMD algorithm that simultaneously optimizes over the modes and eigenvalues using variable projection.  In this approach, they combine the modes and amplitudes $\boldsymbol{\Phi}_{\mathbf{b}}=\boldsymbol{\Phi}\text{diag}(\mathbf{b})$ and solve the following optimization
\begin{align}
    \min_{\boldsymbol{\Lambda},\boldsymbol{\Phi}_{\mathbf{b}}}\|\mathbf{X}-\boldsymbol{\Phi}_{\mathbf{b}}\mathbf{T}(\boldsymbol{\Lambda})\|_F.
\end{align}
The optimized DMD approach has been proven to be quite robust to noisy data and data that is not perfectly approximated by a linear system.  

The sensitivity of DMD to noise was recognized early by Bagheri~\cite{Bagheri2014pof}, and several DMD variants have been proposed to improve the robustness of the algorithm.
Two leading approaches include the forward-backward (FB) DMD~\cite{Dawson2016ef} and the total least-squares (TLS) DMD~\cite{Hemati2017tcfd}.  
Both algorithms correct for sensor noise and process noise. 
Forward-backward DMD averages the standard DMD operator $\mathbf{A}$ and the inverse of a reverse-time DMD operator $\mathbf{A}^b$ computed by switching the order of $\mathbf{X}$ and $\mathbf{X}'$ in the algorithm.  The averaged operator 
\begin{align}
    \mathbf{A}_{\text{FB}}=\left(\mathbf{A}\left(\mathbf{A}^b\right)^{-1}\right)^{1/2}
\end{align} 
cancels out the bias introduced from noise.  
Similarly, total least-squares DMD makes note of the fact that if DMD is viewed as a regression from $\mathbf{x}_k$ to $\mathbf{x}_{k+1}$, then noise will affect both the dependent and independent variables.  
Thus, replacing the standard SVD-based least-squares regression with a total least-squares algorithm will improve the robustness to noise and remove bias.  In the TLS algorithm, the data $\mathbf{X}$ and $\mathbf{X}'$ are stacked into a larger matrix that is used to compute the low-dimensional subspace 
\begin{align}
    \mathbf{Z} = \begin{bmatrix} \mathbf{X}\\ \mathbf{X}'\end{bmatrix} = \mathbf{U_Z}\boldsymbol{\Sigma}_{\mathbf{Z}}\mathbf{V_Z}^\T.
\end{align}
The $\mathbf{U_Z}$ matrix is partitioned into four sub-matrices
\begin{align}
    \mathbf{U_Z} = \begin{bmatrix}\mathbf{U}_{\mathbf{Z},a}  & \mathbf{U}_{\mathbf{Z},b}  \\ \mathbf{U}_{\mathbf{Z},c} & \mathbf{U}_{\mathbf{Z},d} \end{bmatrix}
\end{align}
and the debiased estimate of the DMD operator $\mathbf{A}$ is given by 
\begin{align}
     \mathbf{A}_{\text{TLS}} = \mathbf{U}_{\mathbf{Z},c} \mathbf{U}_{\mathbf{Z},a}^{\dagger}.
\end{align}
In this work, we will compare the exact DMD algorithm, along with the forward-backward, total-least squares, and optimized DMD variants. 

\subsection{Koopman theory}\label{sec:koop}

DMD was quickly connected to nonlinear dynamical systems through Koopman operator theory~\cite{Rowley2009jfm}.  Here we provide a brief introduction of some important concepts that will be relevant when performing DMD on a state augmented with nonlinear observables.  

\newcommand{\kF}{\mathbf{F}}
Let $\kF:\mathbb{X} \times \R \to \mathbb{X}$ be the flow of a continuous- or discrete- time dynamical system, denoted $\mathbf{x}(t)=\kF^t(\mathbf{x}(0))$ where $\mathbf{x}(0), \mathbf{x}(t) \in \mathbb{X}$.
By ``flow'' we mean that $\kF$ is a representation of the group $(\R,+)$, i.e. $\forall t,s\in \R:\, \kF^t \circ \kF^s = \kF^{t+s}$ and $\kF^0$ is the identity map on $\mathbb{X}$.

An ``observable'' is a map $\mathbf{g}:\mathbb{X} \to \mathbb{V}$ into some space of observation values $\mathbb{V}$.
We will usually assume $\mathbb{V}$ is of the form $\C^d$: some vector space over the field of complex numbers.

The ``Koopman Operators'' (a.k.a. ``composition operators'') $\opK^t$ act on observables (e.g. $\mathbf{g}$) by:
\begin{equation}
  \forall t\in\R,\, \mathbf{x}\in\mathbb{X}:\, (\opK^t \mathbf{g}) (\mathbf{x}) = \mathbf{g}( \kF^t (\mathbf{x}) ).
\end{equation}

Our interest lies in finding observables $\mathbf{g}$ such that $\mathbf{g} \circ \kF^t$ is a linear system in $\mathbb{V}$, i.e. there exists a matrix $\M \in \Aut(\mathbb{V})$ such that $\exp(t \M) \cdot \mathbf{g} = \mathbf{g} \circ \kF^t = \opK^t \mathbf{g}$.
Such $\mathbf{g}$ spans an ``eigenspace'' of $\opK$ with a finite-dimensional evolution operator $\M$.
In the case where $\M$ is a scalar, this coincides with the familiar definition of eigenvectors of a linear operator.

\newcommand{\Ma}{\mathbf{A}}
\newcommand{\Mb}{\mathbf{B}}
Note that for any two eigenfunctions $g_{_\Ma}, g_{_\Mb}$ with commuting (matrix-valued) eigenvalues $\Ma$ and $\Mb$, the function $g(\mathbf{x}) := g_{_\Ma}(\mathbf{x})\cdot g_{_\Mb}(\mathbf{x})$ satisfies
\begin{equation}
  g \circ \kF^t = (g_{_\Ma}\circ \kF^t) \cdot (g_{_\Mb}\circ \kF^t)
    = \exp(t \Ma) \cdot g_{_\Ma} \cdot\exp(t \Mb) \cdot g_{_\Mb}
    = \exp(t (\Ma+\Mb)) \cdot (g_{_\Ma} \cdot g_{_\Mb})
    = \exp(t (\Ma+\Mb)) \cdot g,
\end{equation}
and thus the pointwise product of eigenfunctions whose eigenvalues commute is itself an eigenfunction whose eigenvalues are the sum of the constituent eigenvalues.
This also implies (trivially) that non-negative integer powers of eigenfunctions are eigenfunctions.

Together these observations imply that any monomial of eigenfunctions $g_1^{\alpha_1}\cdots g_m^{\alpha_m}$ whose respective matrix eigenvalues were $\M_k$ for $g_k$, and for whom all $\M_k$ commute, is itself an eigenfunction which has the matrix eigenvalue $\alpha_1\M_1 +\ldots+\alpha_m\M_m$.

It is also possible to compute DMD on an augmented state that includes nonlinear functions of the measured state $\mathbf{x}$, in a procedure called \emph{extended DMD}~\cite{Williams2015epl,Williams2015jcd,Li2017chaos}.  
By including nonlinear functions, it is possible to approximate the projection of the Koopman operator onto a larger space of functions, where it may be possible to better approximate the relevant eigenfunctions.  
It was recently shown that extended DMD is equivalent to the earlier \emph{variational approach of conformation dynamics} (VAC)~~\cite{noe2013variational,nuske2014jctc,nuske2016variational}, introduced by No\'e and N\"uske to simulate molecular dynamics with a broad separation of timescales.  
Further connections between eDMD and VAC and between DMD and the time lagged independent component analysis (TICA) are explored in a recent review~\cite{klus2017data}.  
This work will explore extended DMD where the state is augmented with monomial functions.

\section{Methods}
Generic stable linear systems can exhibit a great many behaviors. 
To explore several of these, including some of the interactions between different eigenvalues and eigenspace geometries, we constructed a 3-dimensional linear system.

\subsection{A ``simple'' linear system}

We created a stable real linear system, with states $\mathbf{x} \in \R^3$, and considered specifically the case of a spectrum that consists of a (stable) complex-conjugate pair and a single (stable) real eigenvalue.
Such a system has two invariant subspaces -- a two-dimensional subspace in which the complex-conjugate pair generate a spiral sink, and a one-dimensional subspace in which the real eigenvalue induces exponential convergence.
We chose, without loss of generality, to make the two dimensional invariant space the XY plane with the major and minor axes of the ellipsoidal spiral to be aligned with the X and Y axes.
Thus, the most general 3D system matrix $\mathbf{A}$, with a spectrum consisting of one complex-conjugate pair $\alpha\pm i \beta$ and one real eigenvalue $\lambda$, can be constructed as follows.
We took $\mathbf{A}$ to be constructed from an eigenvector matrix $\mathbf{Q}$, a diagonal matrix $\mathbf{S}$ to rescale the major vs the minor axis in the XY plane, and a block diagonal eigenvalue matrix $\mathbf{\Lambda}$,
\begin{equation}\label{eqn:A}
\mathbf{A} := \mathbf{QS \Lambda} \mathbf{S}^{-1}\mathbf{Q}^{-1}.
\end{equation}
We parameterized $\mathbf{Q}$, $\mathbf{S}$, $\mathbf{\Lambda}$ by
\begin{equation}\label{eqn:QSL}
\mathbf{Q}:=\left[\begin{array}{ccc}
1 & 0 & \sin(\theta)\cos(\phi) \\
0 & 1 & \sin(\theta)\sin(\phi) \\
0 & 0 & \cos(\theta)
\end{array}\right],
\mathbf{S} := \left[\begin{array}{ccc}
s & 0 & 0 \\
0 & 1 & 0 \\
0 & 0 & 1
\end{array}\right],
\mathbf{\Lambda} := \left[\begin{array}{ccc}
\alpha & \beta & 0 \\
-\beta & \alpha & 0 \\
0 & 0 & \lambda
\end{array}\right].
\end{equation}
The parameters $\phi$, $\theta$, and $s$ exhaust all possible 3D linear dynamics with the spectrum of $\mathbf{\Lambda}$ up to an orthogonal similarity transform, as proved in \ref{sec:proof}.

\subsection{Nonlinear observations of the system}

We took as our nonlinear observations of the system the output
\begin{align}
  \mathbf{y} := [x_1+0.1(x_1^2+x_2 x_3),~ x_2+0.1(x_2^2+x_1 x_3),~ x_3+0.1(x_3^2+x_1 x_2)]^\T.
  \label{eqn:y}
\end{align}
For the range of $\mathbf{x}$ values we considered (initial conditions on the unit sphere), $\mathbf{y}$ coordinates were strictly monotone in the corresponding $\mathbf{x}$ coordinates, and observations corresponded to unique states.
As \refFig{traj} illustrates, the trajectories of $\mathbf{y}$ and of $\mathbf{x}$ are so similar that for a casual observer they would seem nearly indistinguishable.
However, as we will show in \ref{sec:results}, these nonlinearities can have profound impact on our ability to model the system using DMD.

\begin{figure}[t]\label{fig:traj}
  \centering
  \includegraphics[width=0.9\textwidth]{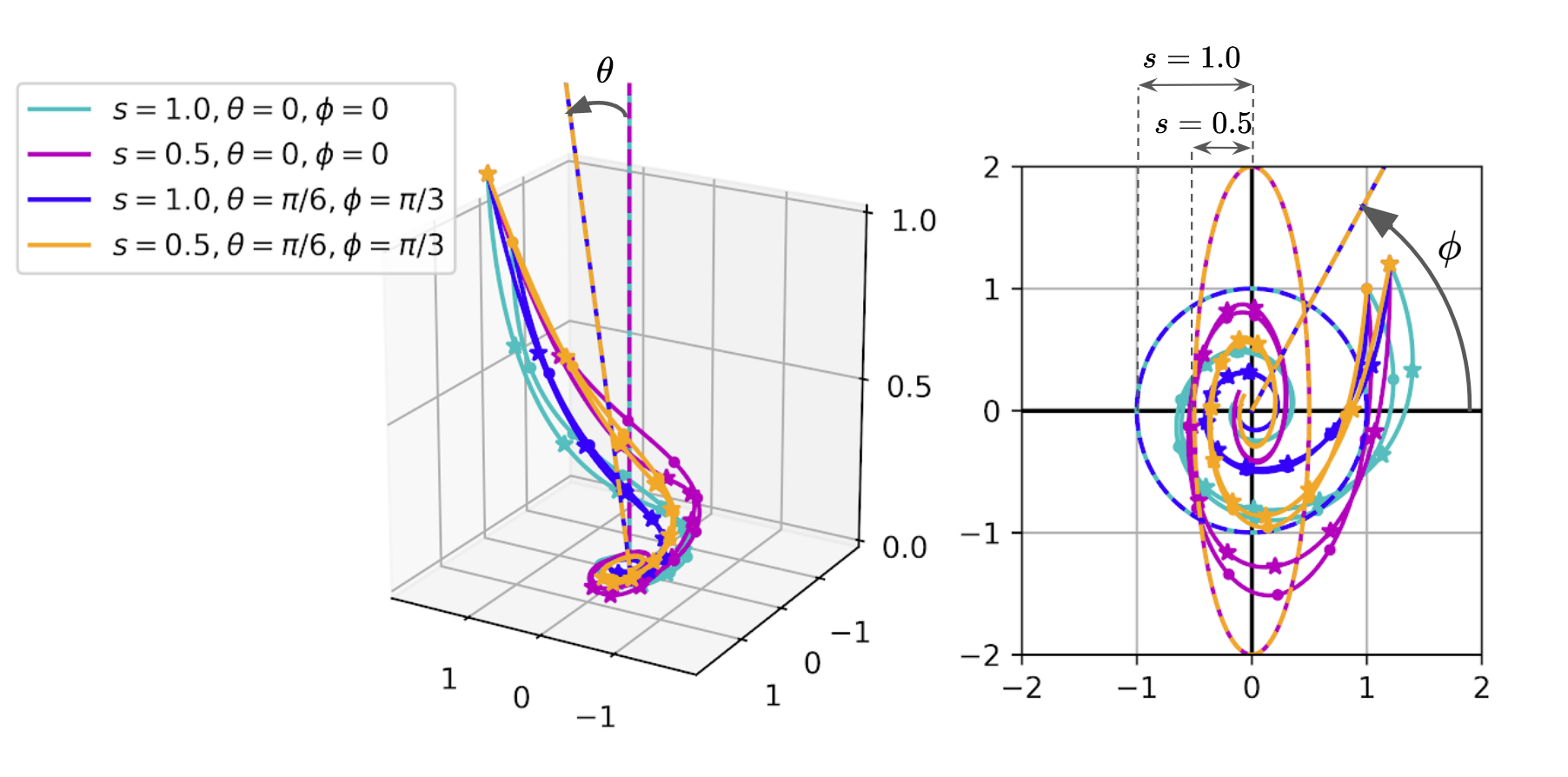}
  \caption{Example trajectories of our 3D system \refEqn{sys} for various parameters of \refEqn{QSL} (colors). %
  Eigenvalues were $0.9(\sqrt{3}/2 \pm 0.5i)$ and $0.6$ throughout, and we used the same initial condition. %
  We plotted trajectories without noise added for both the latent $\mathbf{x}$ (dots), and the mildly nonlinear observable $\mathbf{y}$ (\refEqn{y}; stars), in 3D perspective (left) and in an XY plane projection (right). %
  In the 3D plot we indicated the direction of the $0.6$ eigenspace controlled by $\theta$ and $\phi$ and given by the last column of $Q$ in \refEqn{QSL} (dashed line). %
  We showed the effects of $s$ in the $XY$ plane using ellipsoids (dashed line) whose major and minor axes have ratio $s^2$. %
  The trajectories demonstrate that our nonlinear observations are quite similar to the latent linear dynamics, and that trajectories vary their geometry with the parameters of \refEqn{QSL} even while the spectrum remains the same.
  }
\end{figure}

Because $\mathbf{A}$ had distinct eigenvalues, it was diagonalizable over $\C$.
Consider the diagonal matrix $\mathbf{\tilde\Lambda}$ with 9 diagonal elements comprising the spectrum of $\mathbf{A}$ and all 6 pairwise sums of eigenvalues of $\mathbf{A}$.
Every monomial of the form $x_1^{k_1} x_2^{k_2} x_3^{k_3}$ with $1 \leq k_1+k_2+k_3 \leq 2$ evolves exponentially over time as one of the diagonal elements of $\mathbf{\tilde\Lambda}$.
We conclude that the dynamics of $\mathbf{y}$ could just as well have been written as $\mathbf{z}_{n+1} := \mathbf{P} \mathbf{\tilde\Lambda} \mathbf{P}^{-1} \mathbf{z}_n$, $\mathbf{y} := \mathbf{W} \mathbf{z}$ for matrices $\mathbf{P}$ and $\mathbf{W}$.

This is of utmost importance, since it implies that \emph{this nonlinearly observed 3D linear system is exactly reproduced in a 9D linear system observed linearly}.
More generally, all monomial observables of order $m$ or less can be represented using eigenspaces whose eigenvalues can be obtained as a sum of no more than $m$ of the original eigenvalues; see \ref{sec:diagsys} for details.
We may therefore rest assured that a DMD method that allowed for 9 modes could in fact predict the values of $\mathbf{y}$ to without any truncation errors due to the nonlinearity of the observations.

\subsection{Dataset preparation}\label{sec:dataset}

Our numerical experiments used the 3D systems, with Gaussian system noise and Gaussian measurement noise both set to zero mean and a standard deviation of $0.05$.
The initial conditions were chosen uniformly distributed on the unit sphere.

We explored all combinations of $s \in \{0.1, 0.5, 1.0 \}$, $\phi \in \{0, 0.79, \pi/2\} $, $\theta \in \{0, 1.1, 1.2, 1.3, 1.4, 1.5,1.52,1.53,1.560\} $ with a fixed eigenvalue spectrum $0.5(\sqrt{3}/2 \pm 0.5i)$ and $0.8$. 
We also explored different spectra with complex conjugate eigenvalue pair $0.5(\sqrt{3}/2 \pm 0.5i)$ and a real eigenvalue varying from 0.01 to 1, with orthogonal eigenvector space $s=1, \theta=0$. 
Furthermore, we considered datasets consisting of $100$ points each with: (i) $N=50$ initial conditions for trajectories of length $L=2$; (ii) $N=10$ initial conditions for trajectories of length $L=10$; (iii) $N=2$ initial conditions for trajectories of length $L=50$.
Because of the presence of system noise, all of these datasets meet the condition of \concept{persistence of excitation}~\cite{pe1985ieee}.

In the following, we present contour plots showing the distribution of eigenvalue estimates.
Wherever we show such plots, they smoothed with a Gaussian kernel of $\sigma=0.01$.
We first sorted all eigenvalues by their imaginary parts, and then registered each to its nearest true eigenvalue bin.
To ensure the spectrum symmetry, we discarded estimates with more real eigenvalues than the true spectrum. 
If there were any extra complex eigenvalues pairs, we registered two copies of the average value.
Furthermore, we added data to these contour plots in batches of $300$ numerical experiments. 
This number was sufficient so that the KL-divergence in $P_1$ and $P_2$, the probability distributions before and after a new estimated eigenvalue $\hat{\lambda}$ was added, was smaller than 0.001 for all experiments, i.e. $\sum_{\hat{\lambda}}P_1(\hat{\lambda}) \log\left(\frac{P_1(\hat{\lambda})}{P_2(\hat{\lambda})}\right) < 0.001$.

\section{Results}\label{sec:results}
In this section, we present the eigenvalue estimation by exact DMD on a 3D linear system with mild nonlinear measurements and observation noise.
We examine the effects of system normality, spectrum, mild nonlinearity in the observables, and varying number of initial conditions through their effect on the recovered spectrum and the prediction of the dynamics.  
We found that mildly nonlinear observations and slight non-normality caused large eigenvalue estimation errors.
For a system with a well-posed system matrix, having a dataset with more initial conditions and shorter trajectories can significantly improve the prediction. 
With a slightly ill-conditioned system matrix, a moderate trajectory length improves the spectrum recovery. 

\subsection{Fixed spectrum with different system matrix normality}
We fixed the spectrum of the system matrix to be $0.5(\sqrt{3}/2 \pm 0.5i)$ and $0.8$, and varied its $\phi$, $s$, and $\theta$ as described in \refEqn{A}. 
The linear states evolved with Gaussian system noise $\sigma=0.05$, and they were observed via monomials with Gaussian observation noise $\sigma=0.05$. 
In \refFig{eigDensity1-cond} and \refFig{eigDensity2-cond}, we show the DMD eigenvalue density plot under five different normality settings, using first order and second order monomial observables. 
In the first column, the system matrix $\mathbf{A}$ is well-conditioned, with orthogonal eigenvectors and a centrosymmetric spiral sink. 
In the second column, the  eigenvectors remain orthogonal, and we stretch the spiral sink by varying the parameter $s$, so that the resulting trajectory converges following an ellipsoidal spiral with a minor-to-major axes ratio of $s^2$.
The eigenvalue density contour is stretched vertically when $s$ decreases, which yields a larger variance in the complex frequency estimation.
In the third and fifth column, we varied the angle between the 1-dimensional stable subspace corresponding to the real eigenvalue and the 2-dimensional subspace corresponding to the spiral sink, with the spiral sink remaining centrosymmetric. 
The fourth column exhibits both non-orthogonal eigenvectors and a non-centrosymmetric spiral sink. 
When eigenvectors are non-orthogonal, a complicated multi-modal error structure shows up in the $L=10,50, \theta=1.40, 1.56$ cases. 

To thoroughly explore the effect of system matrix normality on spectrum recovery and dynamics prediction, in \refFig{std-cond1} and \refFig{std-cond2} we plotted the standard deviation of each estimated DMD eigenvalue versus the condition number. 
The complex eigenvalues always come out as conjugate pairs from exact DMD, so we give one plot per pair. 
The discarded trials plot (right lower corner in \refFig{std-cond1} and \refFig{std-cond2}) gives the percentage of DMD estimates resulting in a structurally incorrect estimation, i.e. where the number of real eigenvalues is larger than the ground truth system, because it breaks the symmetry pattern of the complex conjugate pairs. 
When the condition number is below 100, the eigenvalue estimation from a dataset with more initial conditions and short trajectories (squares) has lower std, and less discarded trials. 
When the condition number is larger than 100, the cases with moderate trajectory length (circles) performs better. 

The estimation error caused by system matrix non-normality can be amplified through higher order observables. In the first two columns of \refFig{eigDensity1-cond}, we find that the eigenvalue contours are close to the true eigenvalues. 
The second order observables in \refFig{eigDensity2-cond} amplified the effect of slight changes in the condition number of the system matrix.
Only the slowest modes (red and purple contours) were identifiable. 
Furthermore, in \refFig{std-cond2}, the standard deviation of the eigenvalue estimates increases to 0.25 for most eigenvalues with a modest condition number of 4. 

\begin{figure}\label{fig:eigDensity1-cond}
  \centering
  \includegraphics[width=0.9\textwidth]{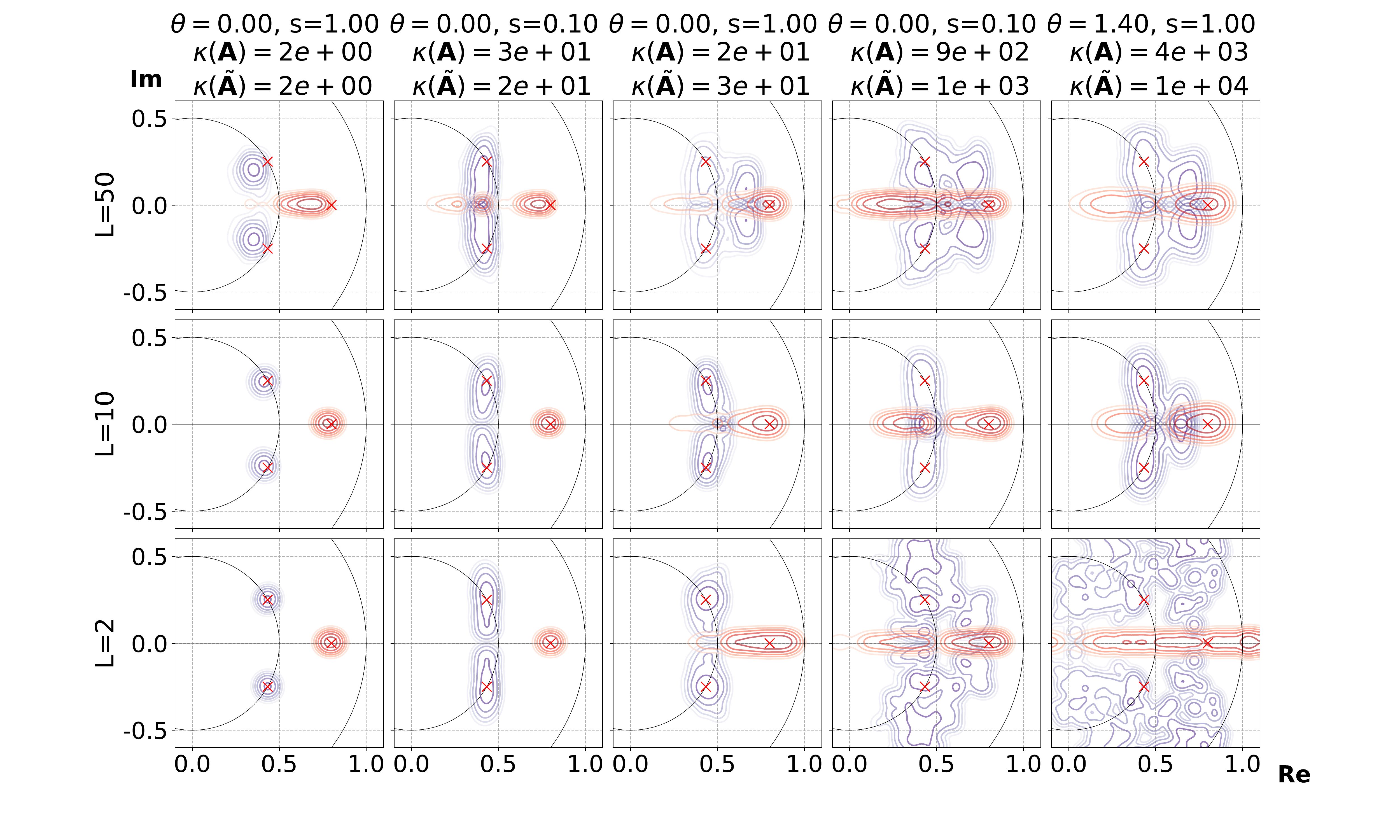}
  \vspace{-0.3in}
  \caption{Eigenvalue density plot with first order linear observables.\
    Different rows have different trajectory lengths and number of trajectories, while holding the same total data points.\  
    The system $\mathbf{A}$ matrix is constructed by $\phi=0$ and $\theta, s$ specified on top of each column.\
   $\kappa(\mathbf{A})$ is the condition number of ground truth system matrix, and $\kappa(\Tilde{\mathbf{A}})$ is the mean of estimated condition number found by exact DMD.\
  Red crosses are ground truth eigenvalues.}
\end{figure}

\begin{figure}\label{fig:std-cond1}
  \centering
  \includegraphics[width=\textwidth]{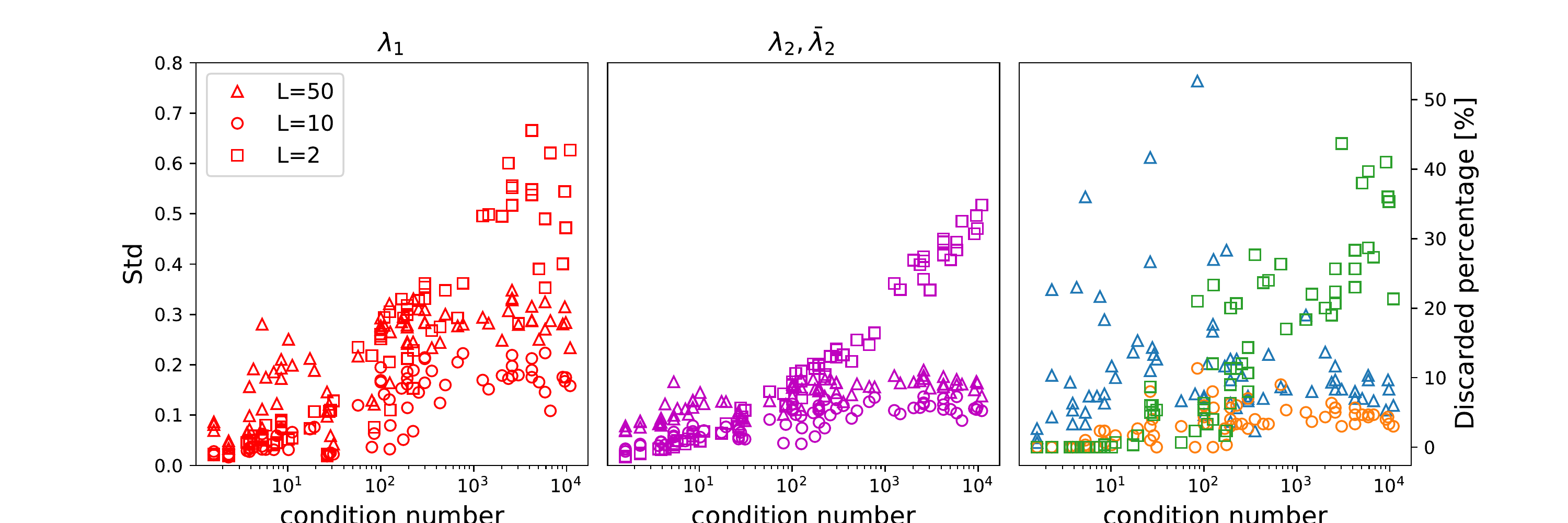}
  \vspace{-0.1in}
  \caption{The standard deviation of each estimated eigenvalue versus system $\mathbf{A}$ matrix condition number plot.\
   The colors of different eigenvalues match the color of contours in \refFig{eigDensity1-cond}.}
\end{figure}

\begin{figure}\label{fig:eigDensity2-cond}
  \centering
  \includegraphics[width=0.9\textwidth]{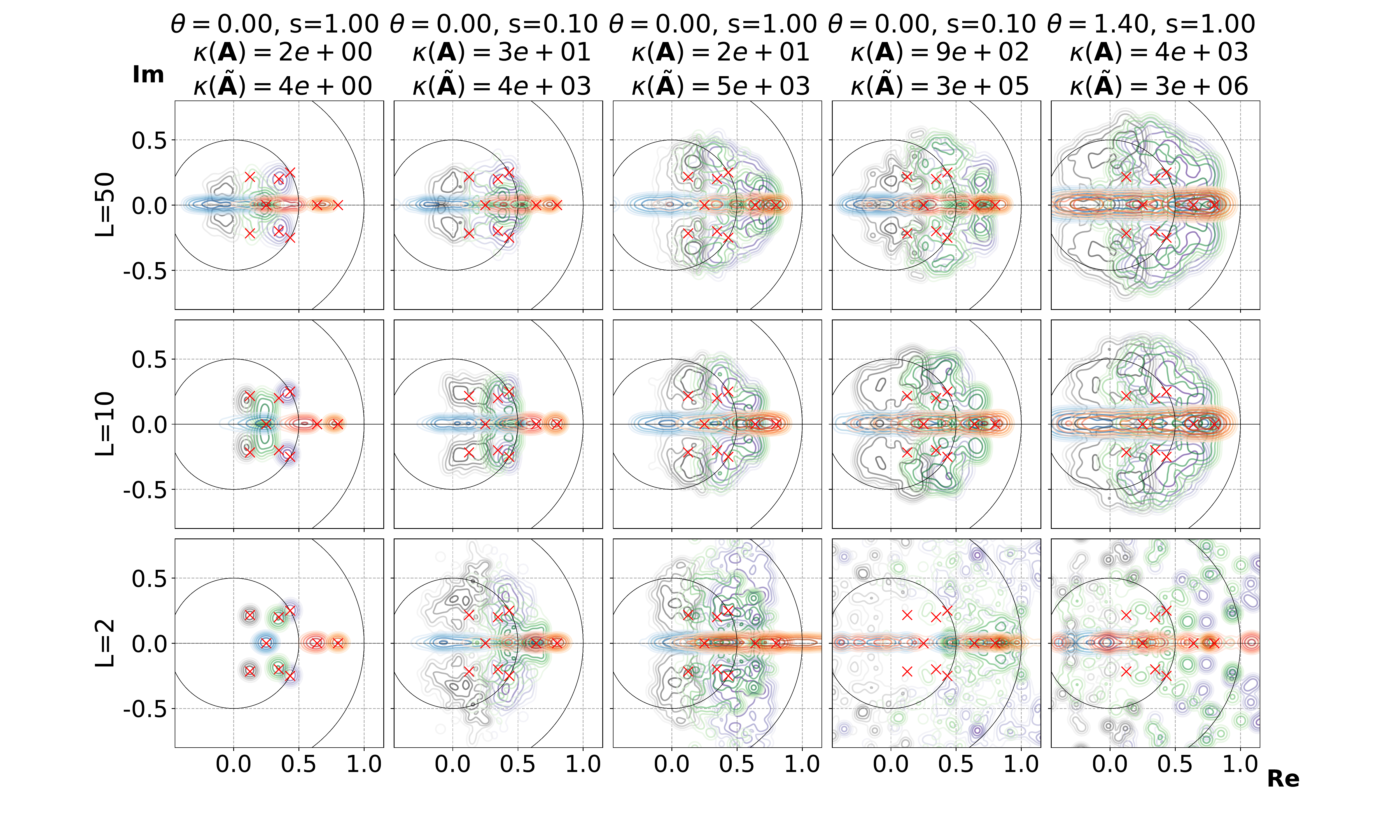}
  \vspace{-0.3in}
  \caption{Eigenvalue density plot with monomial observables up to 2nd order.\
    Different rows have different trajectory lengths and number of trajectories, while holding the same total data points.\
    The system $\mathbf{A}$ matrix is constructed by $\phi=0$ and $\theta, s$ specified on top of each column.\
    $\kappa(\mathbf{A})$ is the condition number of ground truth system matrix, and $\kappa(\Tilde{\mathbf{A}})$ is the mean of estimated condition number found by exact DMD.\
    Red crosses are ground truth eigenvalues.}
\end{figure}

\begin{figure}\label{fig:std-cond2}
  \centering
  \includegraphics[width=\textwidth]{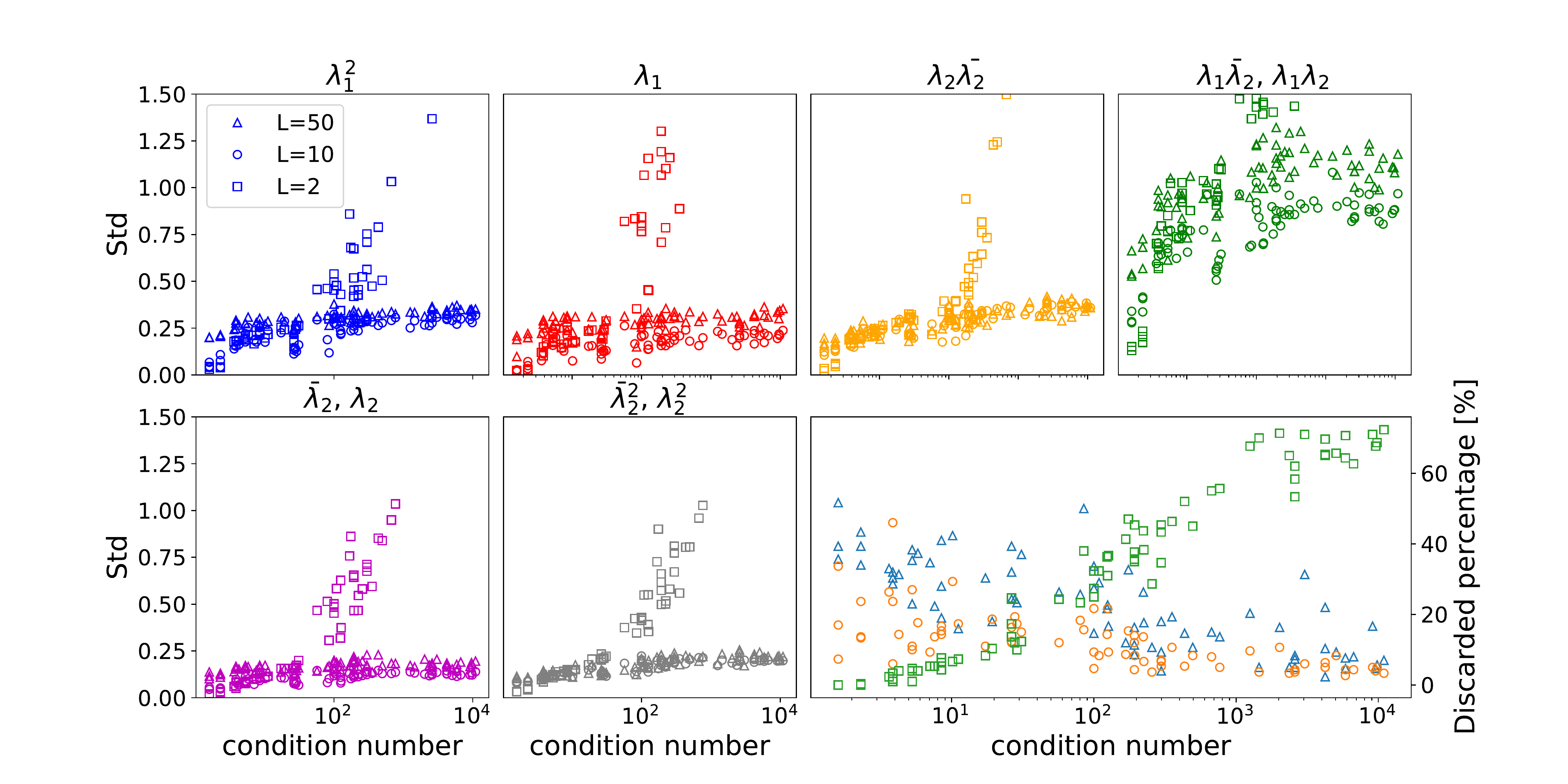}
  \vspace{-0.3in}
  \caption{The standard deviation of each estimated eigenvalue versus system $\mathbf{A}$ matrix condition number plot.\
   The colors of different eigenvalues match the color of contours in \refFig{eigDensity2-cond}. }
\end{figure}

\subsection{Different spectrum with orthogonal eigenspace}
In this section, we kept an orthogonal eigenvector space with $s=1, \theta=0$ and a complex conjugate eigenvalue pair $0.5(\sqrt{3}/2 \pm 0.5i)$. We varied the real eigenvalue from 0 to 1.
The linear states evolved with Gaussian system noise $\sigma=0.05$, and were observed via monomials having Gaussian observation noise $\sigma=0.05$. 
In \refFig{eigDensity1-eigmoving}, we show the DMD eigenvalue density plot under three different spectra using data prepared as described in \ref{sec:dataset}. In \refFig{std-eig1} and \refFig{std-eig2}, we show the standard deviation of the estimated DMD eigenvalues versus varying the eigenvalue $\lambda$. 

More initial conditions with shorter trajectories uniformly outperforms less initial conditions with longer trajectories, resulting in more accurate and precise estimations.
In the case of second order monomial observables with a trajectory length of L=2, the standard deviation of the estimated eigenvalues increases when $\lambda$ moves closer to the conjugate pair.
This suggests a dataset with short trajectories might be more sensitive to a small spectral distance.
Nonetheless, this case still has lower std comparing to less initial conditions with longer trajectories.
When $\lambda$ moves from 0.2 to 1.0, the percentage of discarded trials for L=2 decreases and stays at 0, for L=10 this percentage remains about the same, and for L=50 it increases to 50\%. 

\begin{figure}\label{fig:eigDensity1-eigmoving}
  \centering
  \includegraphics[width=\textwidth]{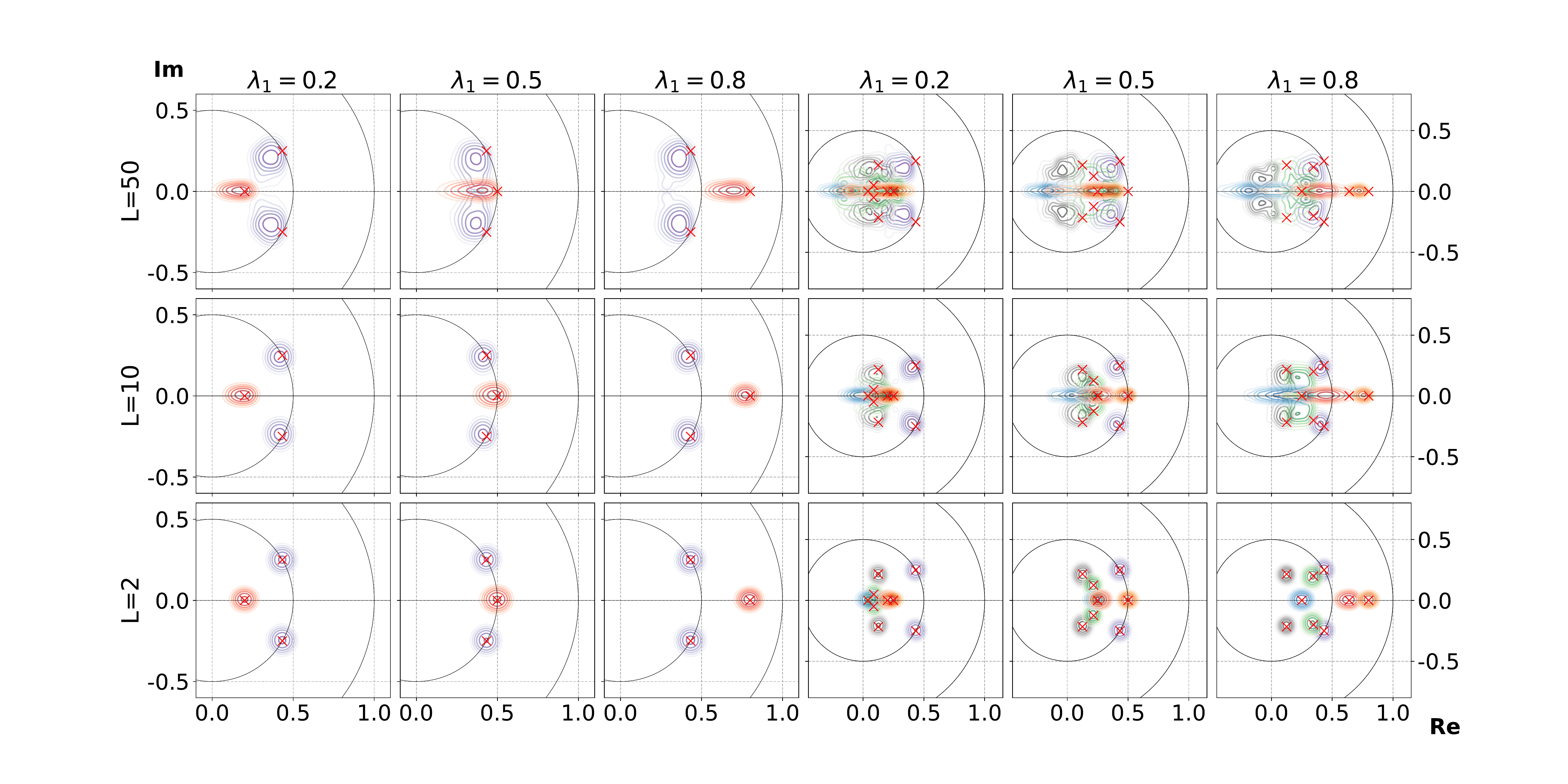}
  \vspace{-0.3in}
  \caption{Eigenvalue density plot with linear observables (left 9 plots) and observables up to second order monomials (right 9 plots). \
  Different rows have different trajectory length, and different columns have different real eigenvalue.\
  Red crosses are ground truth eigenvalues.}
\end{figure}

\begin{figure}\label{fig:std-eig1}
  \centering
  \includegraphics[width=\textwidth]{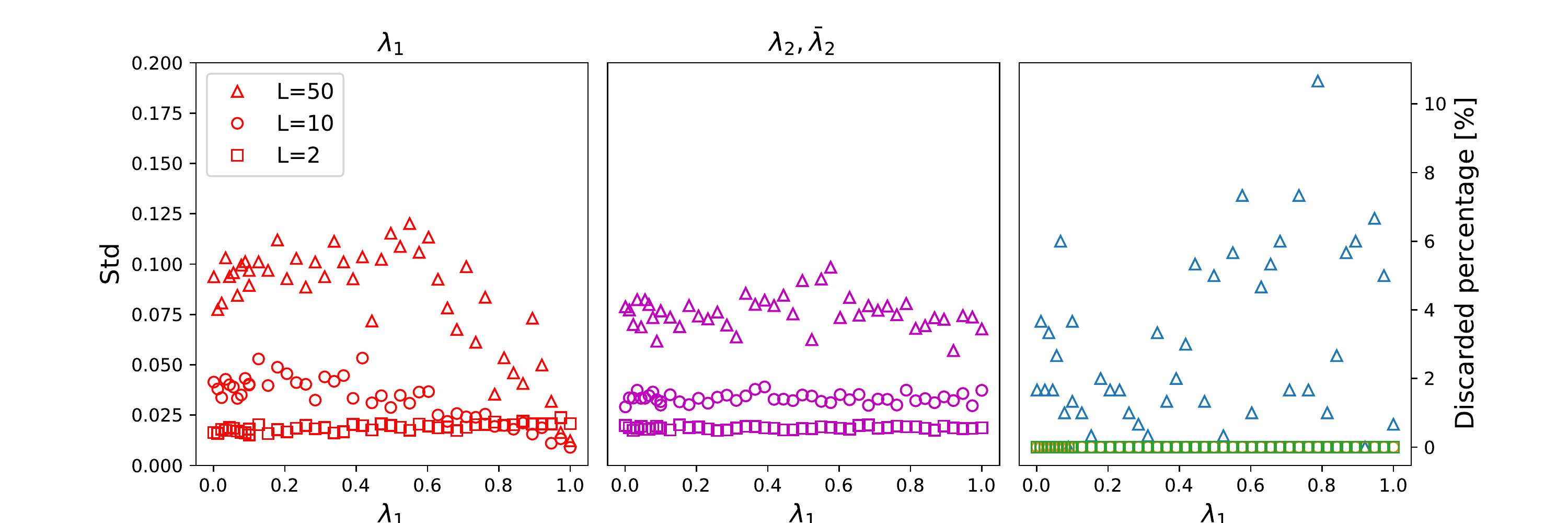}
  \vspace{-0.1in}
  \caption{The standard deviation versus real eigenvalue.\
   The colors of different eigenvalues match the color of contours in the left 9 plots of \refFig{eigDensity1-eigmoving}. }
\end{figure}

\begin{figure}\label{fig:std-eig2}
  \centering
  \includegraphics[width=\textwidth]{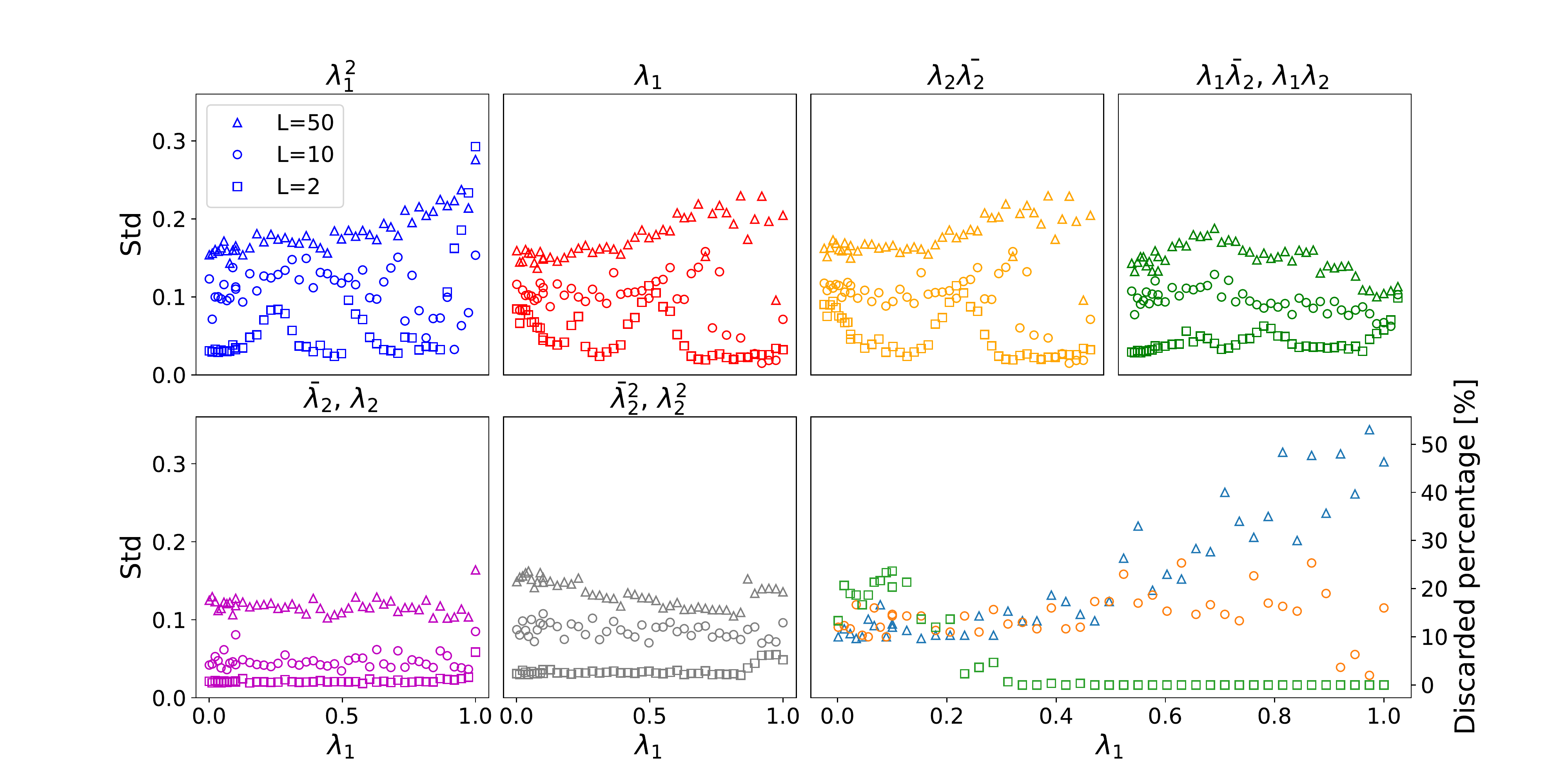}
  \vspace{-0.3in}
  \caption{The standard deviation versus real eigenvalue.\
   The colors of different eigenvalues match the color of contours in the right 9 plots of \refFig{eigDensity1-eigmoving}. }
\end{figure}
\section{Discussion and Conclusion}

We have shown that under conditions of mild system and observation noise, and mild nonlinearity, DMD can encounter significant difficulties in recovering a linear model with the correct spectrum and an accurate prediction of the dynamics, even when an exact linear model is known to exist. 
To study these effects, we constructed a self-contained 3D linear system where it is possible to independently control the eigenvalue spectrum, the normality of the system matrix, the nonlinearity of the observations, the system and observation noise, and the number and length of the trajectories used as training data.  

In this study, we find that DMD is quite sensitive to non-normality (see figs. \ref{fig:eigDensity1-cond}, \ref{fig:std-cond1}, \ref{fig:eigDensity2-cond}, \ref{fig:std-cond2}, and supplemental figs. \ref{fig:app-3}, \ref{fig:app-4}), and to the existence of higher order nonlinear terms (see figs. \ref{fig:eigDensity2-cond}, \ref{fig:eigDensity1-eigmoving} and supplemental figs. \ref{fig:app-2}, \ref{fig:app-4}).
As to the question of which data-set metrics govern this sensitivity, in some cases shorter data with more initial conditions performs better (as in figs. \ref{fig:eigDensity1-eigmoving}, \ref{fig:std-eig1}, \ref{fig:std-eig2}), and in some data fewer, longer time series produce better results (see \ref{fig:eigDensity1-cond} \ref{fig:eigDensity2-cond}).  
The challenge of non-normality has implications for the identification of fluid systems, as highly sheared flows are generally characterized by non-normal linearized dynamics, which was an original motivation for DMD~\cite{Schmid2010jfm}.  

We first fixed the spectrum of our linear system, varying the normality of system matrix and the geometry of trajectories. 
The results suggest that an ill-condition system matrix and mildly-nonlinear observables cause DMD to fail to recover the full spectrum; this phenomenon is especially evident in \refFig{eigDensity2-cond}. 
Furthermore, DMD estimation can be very sensitive to noise when the angle between eigenspaces is small. 
For the case of linear observables, when we decrease the angle between invariant subspaces, a multi-modal error structure appears in \refFig{eigDensity1-cond}. 
The complex eigenvalue pair and the real eigenvalue appear to have switched real parts. 
The cause of this error structure is not understood and remains an interesting avenue of future work. 

We then explored systems with different eigenvalue spectra, where we changed the distance between the eigenvalues. For short trajectories ($L=2$), closer eigenvalues result in higher standard deviation in the predicted spectrum, using second order observables. 
Generally, with enough initial conditions, DMD yields descent prediction of the spectra for all well-conditioned systems explored here. 

DMD under second order observations generally does not provide good estimations for transient oscillatory modes. 
To ensure this is not the effect of spectrum resonance caused by first and second order monomial observations, we compute the same contour plot on a 9D non-resonant linear system with similar spectrum in \refFig{appendix-nonresonant}. 
The result is similar to the right 9 plots of \refFig{eigDensity1-eigmoving}. 
Further, we show that using the latest denoising DMD algorithms in \refSec{appendix-algorithms}, the difficulty in spectrum recovery often persists, although each method performs well for certain cases. 

The 3D linear system analyzed here is a special case of a larger class of systems designed specifically to test the sensitivity of DMD to the challenges listed above.  
The class of diagonal linear systems with multinomial observables provides an algebraically convenient yet sufficiently general class of systems to use to study these problems.
As the theorems in appendix sections \ref{sec:dense} and \ref{sec:diagsys} show, diagonal linear systems with multinomial observables are universal in the sense of being able to approximate any smooth, stable dynamical system (details in Theorem \ref{thm:approx}). 
Furthermore, these systems have a finite dimensional linear representation which is exact -- a linear observation of a linear dynamical system (see Theorem \ref{thm:flatten}) -- and computable in close form.
Thus, the class of diagonal linear systems with multinomial observables is a useful space to test any technique that identifies linear models, including the many variants of DMD.

There are several future directions that are motivated by this work.  
It will be important to explore how these challenges scale to higher dimensional systems.  
There is also the potential to design sampling strategies to improve the conditioning of the DMD procedure. 
In addition to understanding and characterizing these challenges on numerical examples, it will be interesting to apply these careful studies to low-dimensional mechanical systems.  

\section{Acknowledgements} The authors acknowledge funding support from the Army Research Office (W911NF-17-1-0306). 

\section{References}
\begin{spacing}{.8}
 \small{
 \setlength{\bibsep}{4.1pt}
\bibliographystyle{IEEEtran}
\bibliography{main.bib}
 }
\end{spacing}
 
\newpage
\appendix
\newcommand{\Xfx}{\mathfrak{X}^\infty_\text{fix}}
\newcommand{\Gfx}{\mathcal{G}_\text{fix}}
\newcommand{\Psinv}{\Psi^{-1}_\varepsilon}
\newcommand{\xF}{\mathbf{x}}
\newcommand{\yF}{\mathbf{y}}
\newcommand{\xDom}{\mathbb{X}}
\theoremstyle{plain}
\newtheorem{Thm}{Theorem}

\section{Appendix}\label{sec:appendix}
At first, it might appear that the class of systems described by multinomial observations of diagonal linear dynamics is rather restrictive. 
As we show in Section \ref{sec:dense}, the converse is true -- these systems are quite general.
Furthermore, we show in Section \ref{sec:diagsys} that such systems can be viewed instead of as a multinomial observation of a non-resonant linear system, as a linear observation of an extended linear systems whose eigenvalues fall on a lattice.
Thus a system that identifies linear observations of linear dynamics is actually quite general.

\subsection{Density of linearizable systems}\label{sec:dense}
Consider a physical model in the form of a finite dimensional ordinary differential equation $\dot \xF = F_0(\xF)$, with $x\in\xDom$, and the ``flow'' $\kF_0$ associated with $F_0(\cdot)$ as defined in \S \ref{sec:koop}.

The vast majority of control theory as used in practical engineering applications, and a large part of the mathematical modeling done in the physical sciences, deals with the behavior of such systems around equilibria, i.e. points of the form $F_0(\xF_0)=0$.
Without loss of generality, we shall assume that the equilibrium of interest is at $0$, i.e. $F_0(0)=0$.
Furthermore such models are usually of stable systems, i.e. have a neighborhood $U$ in which for all $\xF\in U$, $\lim_{t\to\infty}\kF_0^t(\xF)=0$ for the flow $\kF_0$.
We will restrict our discussion to a compact neighborhood $U$ of $0$ for which the dynamics are stable.

Borrowing from the definitions in \cite{kvalheim2020generic}, we define $\Xfx(U)$ as the set of all $\Cont^\infty$ smooth vector fields with a stable fixed point at $0$ with stability basin $U$.
Within this set there exists $\Gfx(U) \in \Xfx(U)$ which are those vector fields whose Jacobian at $0$ has distinct eigenvalues.

\begin{Thm}\label{thm:approx}
For any choice of $\Cont^\infty$ smooth vector field $F_0 \in \Xfx(U)$ which converges to $0 \in \R^n$, any choice of finite observation time $T>0$, and any desired accuracy $\delta>0$, there exist:
\begin{enumerate}\packList
    \item a linear system with a diagonal system matrix $\Lambda \in \mathbf{M}_{n\times n}(\C)$ 
    \item a multinomial observable of that system $P: \C^n \to U$
    \item an invertible mapping of initial conditions $S: U \to \C^n$
\end{enumerate}
such that the trajectory of the flow $\kF_0^t$ starting at $\xF$ is $\delta$-close to the $P$ observations of the trajectory of the linear system $\dot \yF = \Lambda \yF$ starting at $S(\xF)$.
\end{Thm}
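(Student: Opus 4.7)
The overall strategy is to combine a smooth linearization theorem with a Stone--Weierstrass approximation of the linearizing coordinate change. Concretely, I would first perturb $F_0$ to a nearby vector field whose Jacobian at $0$ has distinct, strongly non-resonant eigenvalues, apply Sternberg's theorem to obtain a smooth conjugacy between the nonlinear flow and its linearization, diagonalize the linear part over $\C$, and then approximate the inverse conjugacy on a compact set of trajectories by a multinomial.

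The perturbation stage exploits the fact that the subset of matrices with distinct and strongly non-resonant eigenvalues is open and dense in $\mathbf{M}_{n\times n}(\R)$, so for any $\varepsilon>0$ there is a linear perturbation $B$ with $\|B\|<\varepsilon$ such that $F_1(\xF) := F_0(\xF) + B\xF$ has a stable equilibrium at $0$ with non-resonant spectrum. By Gronwall's inequality, the flows $\kF_0^t$ and $\kF_1^t$ differ uniformly on $[0,T]\times U$ by at most $\delta/2$ once $\varepsilon$ is chosen small enough (with the required smallness depending on $T$ and the Lipschitz constant of $F_0$ on $U$). Sternberg's linearization theorem then yields a $C^\infty$ diffeomorphism $h$ on a neighborhood $V$ of $0$ conjugating the flow of $F_1$ to its linearization $\dot \yF = A\yF$ with $A := DF_1(0)$. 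Because every trajectory in the stability basin $U$ eventually enters $V$, the formula $h(\xF) := e^{-\tau A} h(\kF_1^\tau(\xF))$ for any sufficiently large $\tau = \tau(\xF)$ extends $h$ smoothly and injectively to all of $U$, with the flow-conjugacy identity inherited by the extension. Diagonalizing $A = R \Lambda R^{-1}$ with $\Lambda \in \mathbf{M}_{n\times n}(\C)$ diagonal, set $S := R^{-1} h$, so that $S$ is injective and satisfies $S(\kF_1^t(\xF)) = e^{t\Lambda} S(\xF)$.

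For the approximation stage, observe that $K := \{e^{t\Lambda} S(\xF) : \xF \in U,\, t \in [0,T]\}$ is compact in $\C^n$ as the continuous image of the compact set $U \times [0,T]$, and $S^{-1}$ is continuous on $K$. By Stone--Weierstrass applied on $K \subset \C^n \cong \R^{2n}$, there is a multinomial $P$ with $\sup_{\mathbf{z} \in K}\|S^{-1}(\mathbf{z}) - P(\mathbf{z})\| < \delta/2$, taking values in (a slight enlargement of) $U$. Then $P(e^{t\Lambda} S(\xF))$ is uniformly within $\delta/2$ of $\kF_1^t(\xF)$ on $U\times [0,T]$, which in turn lies within $\delta/2$ of $\kF_0^t(\xF)$ by the perturbation estimate, giving total error at most $\delta$ as required.

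The main obstacle is the simultaneous control required in the first two stages: Sternberg's smooth linearization needs a Diophantine-type non-resonance condition, not merely distinctness of eigenvalues, so the linear perturbation must secure distinctness, non-resonance, and stability all at once while remaining small enough for the Gronwall bound to hold on $[0,T]$. A secondary difficulty is globalizing the local Sternberg conjugacy from $V$ to the full basin $U$: one must check that the pullback $h(\xF) := e^{-\tau A}h(\kF_1^\tau(\xF))$ is independent of the chosen escape time $\tau(\xF)$ and assembles into a single-valued diffeomorphism, which follows from the flow's group property but should be recorded explicitly. Minor bookkeeping is also needed because $\Lambda$ is complex and $P$ is required to be real-valued on its image in $U$, so $P$ is written as a real multinomial in the real and imaginary parts of its complex arguments.
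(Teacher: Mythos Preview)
Your proposal is correct and follows essentially the same architecture as the paper's proof: perturb $F_0$ to a nearby field with good spectral properties, obtain a global smooth conjugacy to the linearization, approximate the inverse conjugacy by a multinomial via Stone--Weierstrass, control the perturbation error by a Gronwall estimate on $[0,T]$, and finish with the triangle inequality and a diagonalization of the linear part.

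The only substantive difference is the source of the global smooth linearization. The paper invokes results of Kvalheim (2020), which give directly that the $n$ principal Koopman eigenfunctions of the perturbed field assemble into a $\Cont^\infty$ diffeomorphism $\Psi:U\to\C^n$ conjugating the flow to $\exp(t\,\Deriv F_\varepsilon(0))$ on the full basin; the density of $\Gfx(U)$ in $\Xfx(U)$ is likewise quoted from that reference. You instead use Sternberg's local $\Cont^\infty$ linearization (hence your need for a non-resonance hypothesis, which you correctly flag) and then globalize by pulling back along the flow. Your route is more self-contained and exposes the mechanism behind the Koopman eigenfunction construction, while the paper's route is shorter because it outsources both the globalization and the smoothness bookkeeping to the cited theorem; mathematically the two linearizing diffeomorphisms are the same object.
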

\begin{proof}

Using \cite[Theorem 5]{kvalheim2020generic}, we learn that $\Gfx(U)$ is (among other things) dense in $\Xfx(U)$ with respect to the infinity norm, i.e. for every $\varepsilon>0$ and $F_0 \in \Xfx(U)$ there is a $F_\varepsilon \in \Gfx(U)$ such that $\|F_\varepsilon-F_0\|_\infty < \varepsilon$.
From \cite[Theorem 16]{kvalheim2020generic}, we obtain that the $n$ principal Koopman eigenfunctions of $F_\varepsilon$ exist, they are uniquely defined, and together they provide a $\Cont^\infty$ diffeomorphism from $\Psi: U \to \C^n$, that conjugates the flow $\kF_\varepsilon$: $\kF^t_\varepsilon = \Psi^{-1} \circ \exp( t \Deriv F_\varepsilon(0)) \cdot \Psi$ for all $t$.
From the Stone-Weierstrass theorem it follows that there exists a multinomial approximation $\Psinv$ to $\Psi^{-1}$ such that within $\Psi(U)$, $\|\Psinv-\Psi^{-1}\|_\infty < \varepsilon$.
This gives $\| \kF^t_\varepsilon - \Psinv \circ \exp( t \Deriv F_\varepsilon(0)) \cdot \Psi \|_\infty < \varepsilon$.
Note that because it is $\Cont^\infty$ on a compact set $U$, $F_0$ is Lipschitz; assume its Lipschitz constant is $L$.
Consider the difference between observing trajectories of the true flow $\kF_0^t$ and trajectories of the multinomial observations $\Psinv$ of the diagonal linear system $\exp( t \Deriv F_\varepsilon(0))$ over a finite time horizon $T$.
There is a well known result bounding the difference between flows using the infinity norm of the difference between the vector fields:
\begin{align*}
    \|\kF^t_0-\kF^t_\varepsilon\|_\infty \leq  \frac{\exp(tL)-1}{L} \|F_0-F_\varepsilon\|_\infty < \varepsilon C
\end{align*}
with $C := (\exp(TL)-1)/L$.
Using the triangle inequality this yields
\begin{align}
    \|\kF^t_0 - \Psinv \circ \exp( t \Deriv F_\varepsilon(0)) \cdot \Psi \|_\infty <  (C+1) \varepsilon
\end{align}
We complete the proof by choosing $\varepsilon < \delta / (C+1)$; diagonalizing $\Deriv F_\varepsilon(0) = V^{-1}\Lambda V$ with its  eigenvector matrix $V$ (which is possible because $F_\varepsilon\in \Gfx(U)$); defining $P := \Psinv \circ V^{-1}$; and defining $S := V \cdot \Psi$.
\end{proof}

\subsection{Diagonal linear systems}\label{sec:diagsys}

We now further strengthen the result of Theorem \ref{thm:approx}, to show what is in some ways a proof that finite Koopman linearizations can approximate all $\Xfx$ systems.

\begin{Thm}\label{thm:flatten}
Consider the linear dynamical system $\dot \xF = \Lambda \xF$ with diagonal system matrix $\Lambda$ on $\sX = \C^n$.
Let $h: \sX \to \C$ be an observable which is a multinomial (multivariate polynomial) of order $m$ in the components of $\xF \in \sX$.
Then there exists an $N\in \N$, $N\geq n$ and a linear system with a diagonal system matrix $\XMat \in \mathbf{M}_{N\times N}(\C)$, a linear observable of that system $H: \C^N \to \C$, and a mapping of initial conditions $\Psi: U \to \C^N$, such that for all $t$ and $\xF$:
 $$h(\exp(t \Lambda)\cdot \xF) = H \cdot \exp(t \XMat) \cdot \Psi(\xF).$$
\end{Thm}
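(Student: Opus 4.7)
The plan is to prove this by explicit construction, invoking the observation made in \S\ref{sec:koop} that the pointwise product of Koopman eigenfunctions with commuting (scalar) eigenvalues is itself an eigenfunction whose eigenvalue is the sum. Since $\Lambda$ is diagonal with entries $\lambda_1,\ldots,\lambda_n$, each coordinate projection $\xF \mapsto x_i$ is a Koopman eigenfunction of $\exp(t\Lambda)$ with eigenvalue $\lambda_i$. Scalars commute trivially, so every monomial $\xF^{\vec k} := x_1^{k_1}\cdots x_n^{k_n}$ with $\vec k = (k_1,\ldots,k_n)\in\N^n$ is an eigenfunction with eigenvalue $\vec k\cdot \vec\lambda := \sum_i k_i\lambda_i$.

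Concretely, I would enumerate all exponent vectors $\vec k\in\N^n$ with $0 \leq |\vec k| := \sum_i k_i \leq m$, giving $N := \binom{n+m}{m}$ basis monomials (and $N\geq n$, since the $|\vec k|=1$ monomials alone account for $n$ entries). Indexing $\C^N$ by these multi-indices, define $\Psi:\C^n\to\C^N$ coordinate-wise by $\Psi(\xF)_{\vec k} := \xF^{\vec k}$; take $\XMat\in\M_{N\times N}(\C)$ to be the diagonal matrix with $\XMat_{\vec k,\vec k} := \vec k\cdot\vec\lambda$; and, writing the multinomial $h$ in the monomial basis as $h(\xF)=\sum_{\vec k} c_{\vec k}\xF^{\vec k}$, let $H$ be the row vector of coefficients $c_{\vec k}$. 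The assumption that $h$ has order $m$ is exactly what guarantees every nonzero $c_{\vec k}$ corresponds to some diagonal entry of $\XMat$; a constant term is accommodated by including the $\vec k = \vec 0$ index with $\XMat_{\vec 0,\vec 0} = 0$.

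Verification would then be a direct computation:
\begin{equation*}
h(\exp(t\Lambda)\xF) = \sum_{\vec k} c_{\vec k}\prod_i (e^{t\lambda_i}x_i)^{k_i} = \sum_{\vec k} c_{\vec k}\, e^{t\vec k\cdot\vec\lambda}\,\xF^{\vec k} = H\cdot \exp(t\XMat)\cdot \Psi(\xF),
\end{equation*}
where the last equality uses that $\exp(t\XMat)$ is diagonal with entries $e^{t\vec k\cdot\vec\lambda}$ and $\Psi(\xF)_{\vec k} = \xF^{\vec k}$. There is no genuine obstacle here: the proof is essentially bookkeeping, and the only subtle point is that the construction relies crucially on $\Lambda$ being diagonal (equivalently, all eigenvalues pairwise commuting as scalars), since otherwise the product-of-eigenfunctions argument from \S\ref{sec:koop} would require matrix-valued eigenvalues to commute -- an additional hypothesis that is trivial in the diagonal case but would fail for a general system matrix.
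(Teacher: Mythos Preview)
Your proposal is correct and follows essentially the same construction as the paper: both define $\Psi$ as the lift to all monomials of order at most $m$, take $\XMat$ diagonal with entries $\sum_i \alpha_i\lambda_i$ indexed by multi-indices, and read off $H$ from the monomial coefficients of $h$. The only cosmetic differences are that you motivate the construction via the eigenfunction-product observation of \S\ref{sec:koop} and give the explicit count $N=\binom{n+m}{m}$, whereas the paper proceeds by direct notation and remarks that one could alternatively take $N$ to be the number of nonzero coefficients.
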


\begin{proof}
The proof is primarily a matter of notation.

Let $\text{diag}(\Lambda) = \lambda_1,\ldots,\lambda_n$:
\begin{equation}
  \exp(t\Lambda)\cdot \xF = \exp(t\Lambda)\cdot (x_1,\ldots,x_n) = (\e^{t \lambda_1}x_1, \ldots, \e^{t \lambda_n}x_n).
\end{equation}

An $h$ can be written as
\begin{equation}
  h(\xF)
     = \sum_{\alpha_1+\ldots+\alpha_n \leq m} H_{(\alpha_1,\ldots,\alpha_n)} x_1^{\alpha_1}\cdots x_n^{\alpha_n} = \sum_{\|\alpha\|_1\leq m} H_\alpha \xF^\alpha
\end{equation}
where the latter uses multi-index notation for $\alpha$.
The number of non-zero $H_\alpha$ coefficients is the $N$ needed.

However, this same class of systems can be viewed in a different light.
Define $\sX^{[m]}$ the space given by monomial features of order $m$ or less from $\sX$, i.e. the image of $\sX$ under the map
\begin{equation}
  \Psi(\xF) :=  \bigoplus_{\|\alpha\|_1\leq m} \xF^\alpha
\end{equation}
where $\|\cdot\|_1$ is the 1-norm.
We denote by $\Psi_\alpha(\xF)$ the $\alpha$ component of this result.
This definition allows us to view $\sum H_\alpha \xF^\alpha$ as an inner product, and $H$ as a linear functional $L(\sX^{[m]},\C)$, giving
\begin{equation}
    h(\xF) = \sum_{\|\alpha\|_1\leq m} H_\alpha \xF^\alpha = H \cdot \Psi(\xF).
\end{equation}
We have ``lifted'' $\sX$ into a feature space $\sX^{[m]}$ in which $h$ admits a linear representation.

Given that $\exp(t\Lambda) \cdot \xF = (\e^{t \lambda_1}x_1, \ldots, \e^{t \lambda_n}x_n)$, note that for any multi-index $\alpha$ we can define $\Xeig{\alpha} :=  \sum_{i=1}^n \lambda_i \alpha_i$ and have
\begin{equation}
  \Psi_\alpha(\exp(t\Lambda) \cdot \xF ) = (\exp(t\Lambda) \cdot \xF )^\alpha
  = \e^{t \lambda_1\alpha_1} x_1^{\alpha_1}\cdots \e^{t \lambda_n\alpha_n} x_n^{\alpha_n} = \e^{t \Xeig{\alpha}} x^\alpha = \e^{t \Xeig{\alpha}} \Psi_\alpha(\xF)
\end{equation}
In other words, if we define a diagonal matrix $\XMat$ whose diagonal elements are $\Xeig{\alpha}$ for the same collection of multi-indices $\alpha$ used to create $\Psi$, $\Psi(\exp(t\Lambda) \cdot \xF) = \exp(t \XMat) \Psi(\xF)$.
\end{proof}

Using Theorem \ref{thm:flatten}, we have shown that the approximation obtained from Theorem \ref{thm:approx} in terms of non-linear observations of a linear system can in fact be viewed as a linear observation of an extended linear system.
This extended linear system has a very special structure -- its eigenvalues fall on the lattice $\Xeig{\alpha}$ generated from the original system's eigenvalues $\lambda$.
It is also initialized only on the (dynamically invariant) algebraic variety which is the image of $\Psi$, $\sX^{[m]} \subset \C^N$.
Nonetheless, the result tells us that all systems in $\Xfx(U)$ can be approximated to any chosen accuracy $\delta>0$ and for any time horizon $T>0$ by a linear observation of a linear system.

\subsection{All 3x3 cases are covered}\label{sec:proof}

\begin{lemma}
Any $3\times3$ real matrix $\mathbf{M}$ with one complex-conjugate pair $\alpha\pm i\beta$ and one real eigenvalue $\lambda$ can be parametrized by \refEqn{A} and \refEqn{QSL} up to an orthogonal similarity transform $\mathbf{U}$. i.e.
\begin{equation}
    \mathbf{M} = \mathbf{UAU}^\T = \mathbf{UQS\Lambda S}^{-1}\mathbf{Q}^{-1}\mathbf{U}^\T.
\end{equation}
\end{lemma}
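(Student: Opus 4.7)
The plan is to isolate the invariant subspaces of $\mathbf{M}$ by an orthogonal change of basis, normalize the rotation block on the two-dimensional complex invariant subspace by a further orthogonal similarity in the XY plane, and then read off the parameters $s, \theta, \phi$ from the direction of the real eigenvector in the resulting canonical form. Because the eigenvalues $\alpha \pm i\beta$ and $\lambda$ are distinct (taking $\beta \neq 0$), the real matrix $\mathbf{M}$ has a two-dimensional real invariant subspace $V_c$ (the real span of a complex eigenvector pair) and a one-dimensional real invariant subspace $V_r$, and these are transverse. Choose an orthonormal basis $(u_1, u_2)$ of $V_c$ and any unit $u_3 \perp V_c$, and let $\mathbf{U}_1 := [u_1\ u_2\ u_3]$. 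Invariance of $V_c$ forces zeros in positions $(3,1),(3,2)$ of $\mathbf{U}_1^\T \mathbf{M} \mathbf{U}_1$, and matching the spectrum forces the $(3,3)$ entry to equal $\lambda$:
\begin{equation*}
\mathbf{U}_1^\T \mathbf{M} \mathbf{U}_1 \;=\; \begin{pmatrix} \mathbf{M}_{11} & \mathbf{m} \\ 0 & \lambda \end{pmatrix},
\end{equation*}
where $\mathbf{M}_{11}$ is $2\times 2$ with eigenvalues $\alpha \pm i\beta$ and $\mathbf{m} \in \R^2$.

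The main technical step, which I expect to be the obstacle, is to show that every such $\mathbf{M}_{11}$ is orthogonally similar to $\left(\begin{smallmatrix} \alpha & s\beta \\ -\beta/s & \alpha \end{smallmatrix}\right)$ for some $s > 0$. Set $J := \mathbf{M}_{11} - \alpha I = \left(\begin{smallmatrix} p & q \\ r & -p \end{smallmatrix}\right)$ (traceless, with $\det J = -p^2 - qr = \beta^2 > 0$, hence $qr < 0$). Conjugating by a planar rotation of angle $\psi$ produces a $(1,1)$ entry equal to $p\cos(2\psi) + \tfrac{q+r}{2}\sin(2\psi)$, which can be made to vanish by solving $\tan(2\psi) = -2p/(q+r)$ (and trivially when $q+r=0$). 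The rotated matrix then has the form $\left(\begin{smallmatrix} 0 & q' \\ r' & 0 \end{smallmatrix}\right)$ with $q' r' = -\beta^2$; applying the orthogonal reflection $\text{diag}(1,-1)$ if necessary achieves $q' > 0 > r'$, after which setting $s := q'/\beta$ yields $r' = -\beta/s$ and delivers the claimed form. Embedding this $2\times 2$ orthogonal map as the upper-left block of a $3\times 3$ orthogonal $\mathbf{U}_2$ leaves the bottom row of the matrix intact, so
\begin{equation*}
(\mathbf{U}_1 \mathbf{U}_2)^\T \mathbf{M} (\mathbf{U}_1 \mathbf{U}_2) \;=\; \begin{pmatrix} \alpha & s\beta & m_1' \\ -\beta/s & \alpha & m_2' \\ 0 & 0 & \lambda \end{pmatrix}
\end{equation*}
for some $m_1', m_2' \in \R$.

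To close the proof I would identify the real eigenvector $\mathbf{v}$ of this matrix. Since $V_r$ is transverse to $V_c$, the third component $v_3$ is nonzero; after possibly conjugating by the orthogonal $\mathbf{U}_3 := \text{diag}(1,1,-1)$ we may assume $v_3 > 0$, and then write the unit eigenvector as $\mathbf{v} = (\sin\theta\cos\phi,\, \sin\theta\sin\phi,\, \cos\theta)^\T$ for unique $\theta \in [0, \pi/2)$ and $\phi \in [0, 2\pi)$. Set $\mathbf{U} := \mathbf{U}_1 \mathbf{U}_2 \mathbf{U}_3$. A direct computation of $\mathbf{A} = \mathbf{QS\Lambda S}^{-1}\mathbf{Q}^{-1}$ with these $s, \theta, \phi$ shows that $\mathbf{A}$ has the same top-left $2\times 2$ block, the same bottom row $(0,0,\lambda)$, and the same real eigenvector direction $\mathbf{v}$ as $\mathbf{U}^\T \mathbf{M} \mathbf{U}$. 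Since $v_3 \neq 0$, the first two components of the eigenvalue equation $\mathbf{A}\mathbf{v} = \lambda\mathbf{v}$ uniquely determine the $(1,3)$ and $(2,3)$ entries from the remaining data, so the two matrices must coincide. Hence $\mathbf{U}^\T \mathbf{M} \mathbf{U} = \mathbf{A}$ and $\mathbf{M} = \mathbf{U}\mathbf{A}\mathbf{U}^\T$ as claimed.
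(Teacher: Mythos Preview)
Your proof is correct and follows the same overall architecture as the paper's: first use an orthogonal change of basis to bring the two-dimensional complex invariant subspace into the $XY$ plane, and then match the remaining parameters $s,\theta,\phi$. The difference lies in how the matching step is executed. The paper works at the level of eigenvector matrices, writing $\mathbf{M}'=\mathbf{U}_2\mathbf{QS\Lambda S}^{-1}\mathbf{Q}^{-1}\mathbf{U}_2^{-1}$ and comparing $\mathbf{U}_2\mathbf{QS}$ with $\mathbf{P}'$; it then offloads the existence of suitable $\omega,s$ to a Mathematica \texttt{Solve} call and asserts that a solution with $0<s\leq 1$ always exists. You instead normalize the $2\times 2$ block analytically by diagonalizing the symmetric part of the traceless matrix $J=\mathbf{M}_{11}-\alpha I$ via a planar rotation, which is more elementary and self-contained; and you recover the $(1,3),(2,3)$ entries from the real eigenvector equation rather than from column matching. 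Your route avoids the computer-algebra dependence and makes the existence of $s>0$ transparent, at the modest cost of not pinning down $s\leq 1$ (which can be recovered, if desired, by a further $\pi/2$ rotation in the $XY$ plane sending $s\mapsto 1/s$).
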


\begin{proof} 
Any $3\times3$ real matrix $\mathbf{M}$ with this spectrum has three distinct eigenvalues, and therefore, is block diagonalizable.
Let $\mathbf{v}_1$ be a (complex) eigenvector associated with $\alpha+i\beta$ and $\mathbf{v}_2$ be a (real) eigenvector associated with $\lambda$.
The eigendecomposition of $M$ gives $\mathbf{M} = \mathbf{P\Lambda P^{-1}}$, where 
$\mathbf{P} = [\text{Re}(\mathbf{v}_1),\text{Im}(\mathbf{v}_1), \mathbf{v}_2 ]$. 
W.l.o.g., we can transform $\mathbf{M}$ by a rotation matrix $\mathbf{U}_1$, so that the transformed matrix $\mathbf{M'} = \mathbf{U_1MU_1^T} = \mathbf{P'\Lambda P'^{-1}}$ has complex eigenvector pair in $xy$-plane, i.e.

$$\mathbf{P}' = \left[\begin{array}{ccc}
    \text{Re}(\mathrm{v}_{1,1}') & \text{Im}(\mathrm{v}_{1,1}') & \vdots \\
    \text{Re}(\mathrm{v}_{1,2}') & \text{Im}(\mathrm{v}_{1,2}') & \mathbf{v}_2'\\
    0 & 0 & \vdots
\end{array}\right].$$

The proof is the same as showing there always exists an orthogonal similarity transform $\mathbf{U}_2$, \\
s.t. $\mathbf{M' = U}_2 \mathbf{Q S \Lambda} \mathbf{S}^{-1}\mathbf{Q}^{-1}\mathbf{U}^{-1}_2$. 
Let $\mathbf{U}_2$ be a rotation matrix in $xy$-plane:
$$
\mathbf{U}_2 = \left[\begin{array}{ccc}
    \cos(\omega) & -\sin(\omega) & 0\\
    \sin(\omega) & \cos(\omega) & 0\\
    0 & 0& 1
\end{array}\right], \mathbf{U}_2\mathbf{QS} = \left[\begin{array}{ccc}
    s\cos(\omega) & -\sin(\omega) & \sin(\theta)\cos(\omega+\phi)\\
    s\sin(\omega) & \cos(\omega) & \sin(\theta)\sin(\omega+\phi)\\
    0 & 0& \cos(\theta)
\end{array}\right].
$$
The third column of $\mathbf{U}_2\mathbf{QS}$ can match any eigenvector $\mathbf{v}_2'$ by picking angle $\theta$ and $\phi$.
The following Mathematica code demonstrates that we can always pick $\omega$ and $s$ to match the upper-left $2\times2$ block.
In this code, $k_1 = \frac{ly+xz}{lx-yz}, k_2 = \frac{l^2+z^2}{lx-yz}$ and $\left[\begin{array}{cc}
    x & y \\
    z & l
\end{array}\right] =    \left[\begin{array}{cc}\text{Re}(\mathrm{v}_{1,1}') & \text{Im}(\mathrm{v}_{1,1}') \\
\text{Re}(\mathrm{v}_{1,2}') & \text{Im}(\mathrm{v}_{1,2}')\end{array}\right]$. 
Thus a solution satisfying $0<s\leq1$ and $0\leq\omega<2\pi$ always exists.

\begin{tabular}{c}
\hline
\begin{lstlisting}[language=Mathematica]
Lam = {{a,b},{-b,a}};
U ={{Cos[w],-Sin[w]},{Sin[w],Cos[w]}};
T = {{1,1}, {-I,I}};
S ={{s,0},{0,1}};
A = Simplify[U.S.Lam.Inverse[S].Inverse[U]];
Vi = Transpose[Simplify[Eigenvectors[A]]];
Vip = Simplify[Vi.Inverse[T]];
Simplify[Solve[Vip[[1,1]]==k1 && Vip[[1,2]]==k2, {w,s}]]
\end{lstlisting}\\
\hline \\
\end{tabular}
\end{proof}

\subsection{9D linear system with non-resonant modes}
We present a 9D linear system with linear observables that having spectrum similar to the right 9 plots of \refFig{eigDensity1-eigmoving}. 
Each eigenvalue differs less than 0.01. 
The eigenvalue density plots in \refFig{appendix-nonresonant} and \refFig{eigDensity1-eigmoving} are similar as well. 
This shows the eigenvalue estimation error is not cause by eigenvalue resonance with second order observables, but the number and spectrum distance. 

\begin{figure}[!t]\label{fig:appendix-nonresonant}
  \centering
  \includegraphics[width=0.6\textwidth]{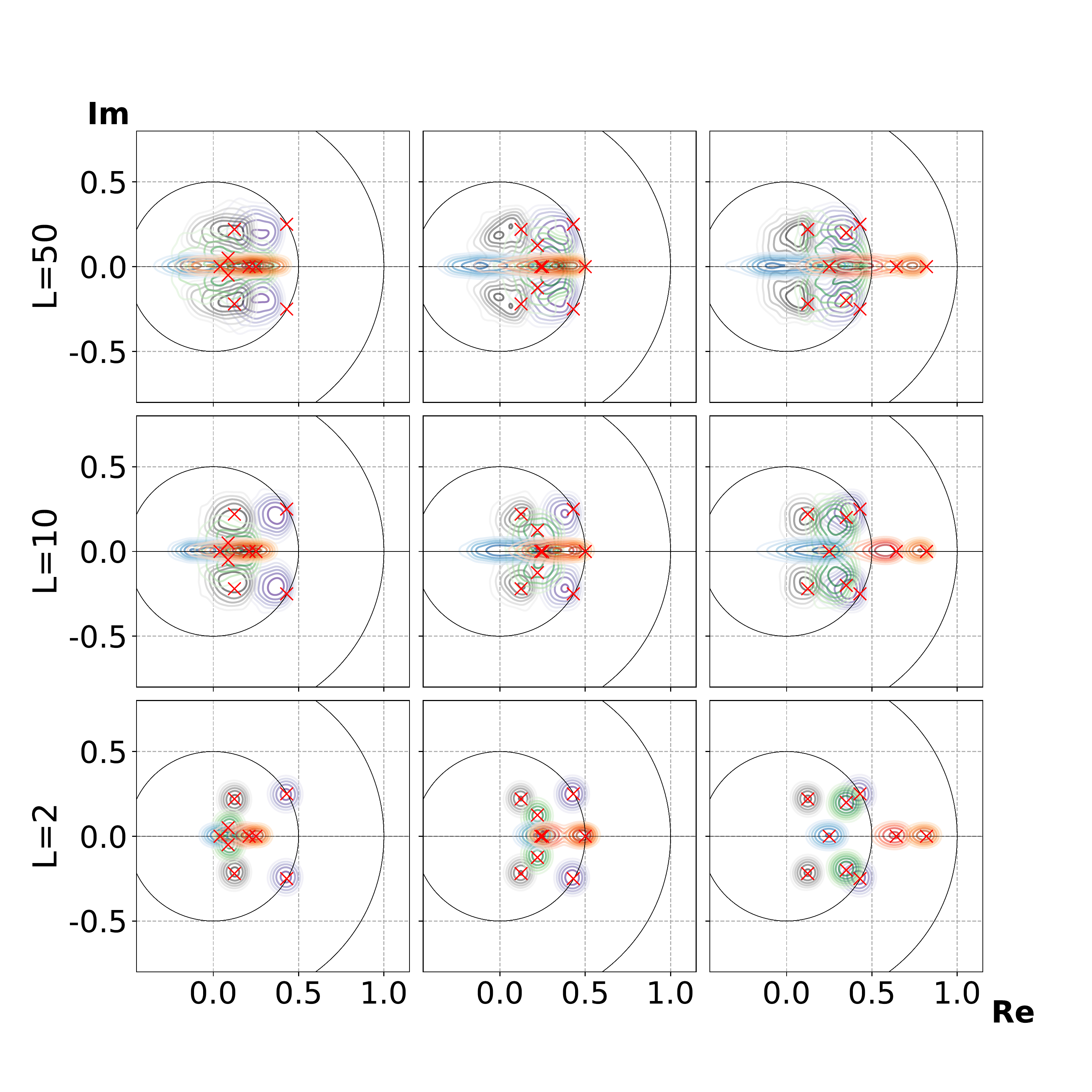}
  \vspace{-0.3in}
  \caption{Eigenvalue density plot using slightly perturbed non-resonant eigenvalues.\
  Different rows have different trajectory length, and different columns have similar eigenvalue distribution as right 9 plots of \refFig{eigDensity1-eigmoving}.\
  Red crosses are ground truth eigenvalues.}
\end{figure}

\subsection{Other DMD algorithms}\label{sec:appendix-algorithms}
We present the results with the same setting as in section \ref{sec:results}, using forward-backward (FB) DMD~\cite{Dawson2016ef}, total-least square (TLS) DMD~\cite{Hemati2017tcfd} and optimized DMD~\cite{Askham2018siads}. 
The difficulties in spectrum recovery, and sensitivity to non-normality and higher order observables persist.
The optimized DMD does not support multiple trajectories as input data, so it only uses one trajectory with specified length throughout the plots. 

In \refFig{app-1}-\refFig{app-4}, we show the contour plots with the same system as the first and fourth column of \refFig{eigDensity1-cond} and \refFig{eigDensity2-cond}.
FB DMD and TLS DMD can correct the underestimation bias caused by observation noise in exact DMD~\cite{Kutz2016book}, but they overestimate the modes under both system and observation noise in \refFig{app-1}. 
We also did not see bias in exact DMD on a dataset with more initial conditions.  
In \refFig{app-3}, FB DMD removes the multi-modal error, and gives better estimation on the real eigenvalue with $L=10,50$. 
In \refFig{app-2} and \refFig{app-4}, all the algorithms encountered difficulties to recovery the full spectrum with second order observables.
Optimized DMD with $L=50$ in \refFig{app-2} gives correct frequency estimation for the slowest decaying oscillatory mode (purple contour), and in \refFig{app-4} the purple contour is surrounding all of the oscillatory modes.
This might indicate the slowest decaying mode from Optimized DMD can always give some reliable information about the dynamics. 
Furthermore, Optimized DMD only used one trajectory with length 50, i.e. half of the data comparing with the other algorithms. 
A promising future work is to make Optimized DMD support multiple trajectories. 

\newpage
\begin{figure}[!t]
  \centering
  \includegraphics[width=0.7\textwidth]{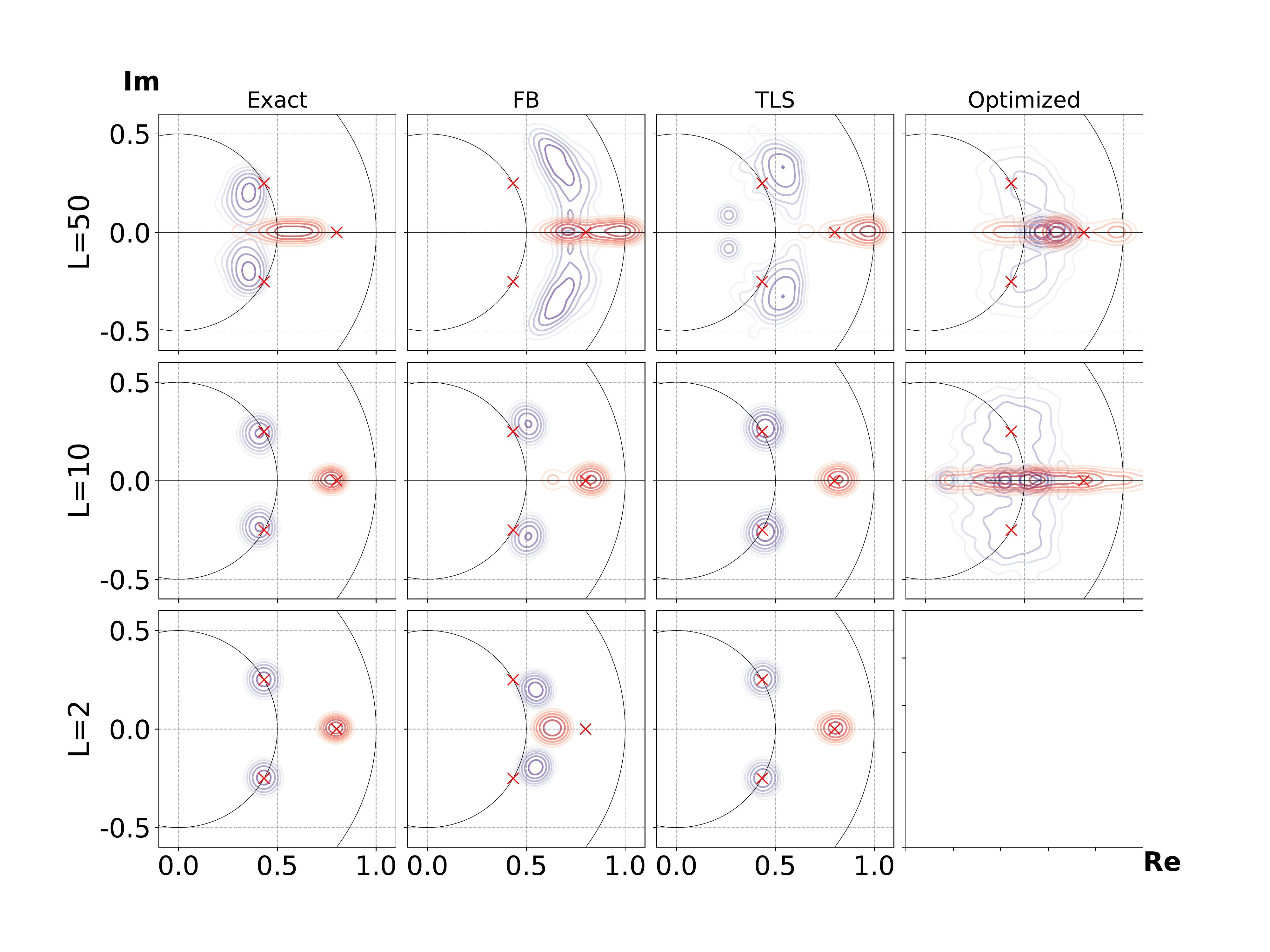}\label{fig:app-1}
  \vspace{-0.3in}
  \caption{Eigenvalue density plot with first order observables.\
  System $A$ matrix is constructed by $\theta=0, \phi=0, s=1$.\
  Different rows have different trajectory length, and different columns use different algorithms.\
  Red crosses are ground truth eigenvalues. Optimized DMD uses only 1 trajectory.}
\end{figure}

\begin{figure}[!bht]
  \centering
  \includegraphics[width=0.7\textwidth]{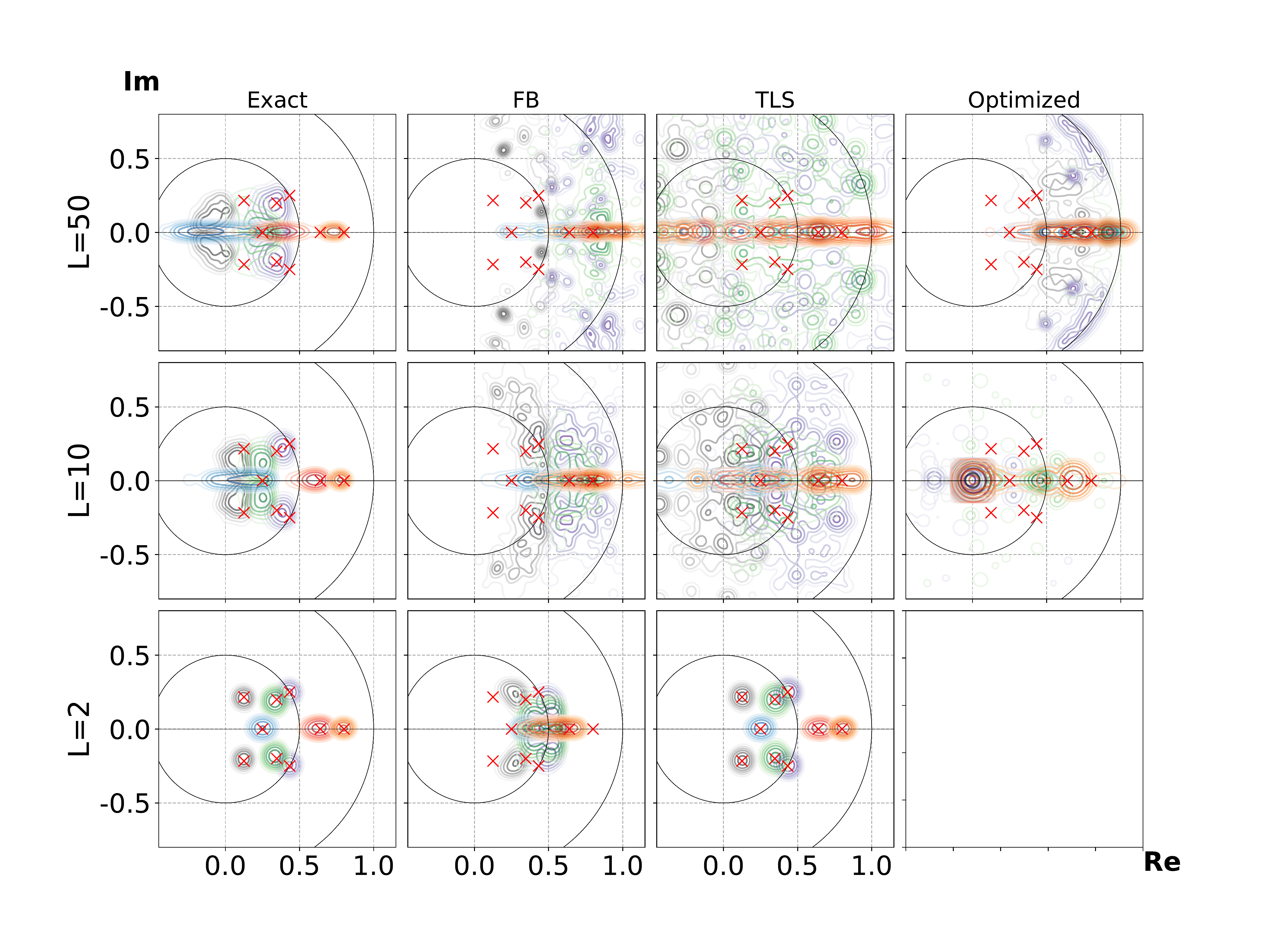}\label{fig:app-2}
  \vspace{-0.3in}
  \caption{Eigenvalue density plot with monomial observables up to 2nd order.\
  System $A$ matrix is constructed by $\theta=0, \phi=0, s=1$.\
  Different rows have different trajectory length, and different columns use different algorithms.\
  Red crosses are ground truth eigenvalues. Optimized DMD uses only 1 trajectory.}
\end{figure}

\begin{figure}[!t]
  \centering
  \includegraphics[width=0.7\textwidth]{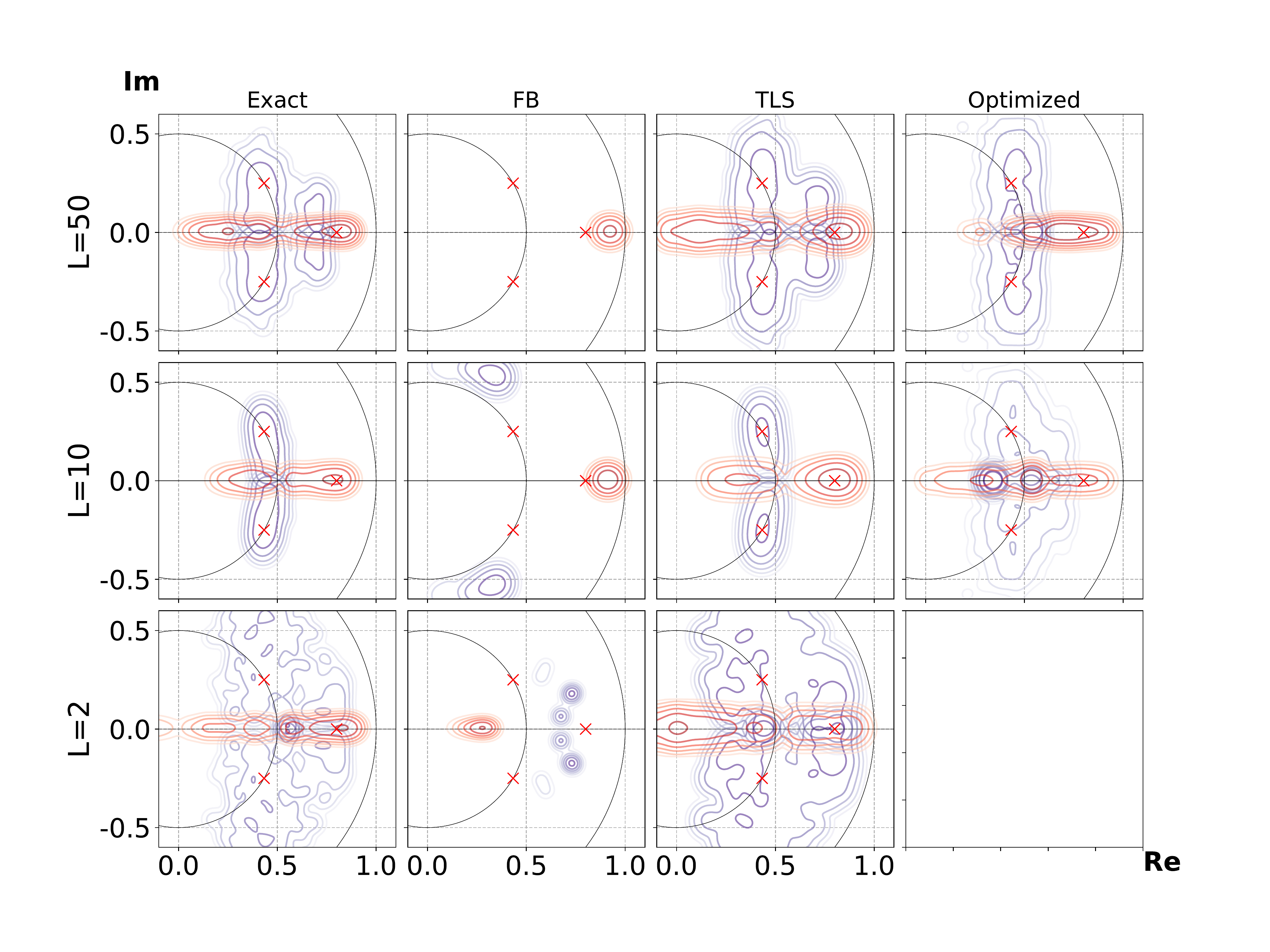}\label{fig:app-3}
  \vspace{-0.3in}
  \caption{Eigenvalue density plot with first order observables.\  
  System $A$ matrix is constructed by $\theta=1.4, \phi=0, s=0.1$.\
  Different rows have different trajectory length, and different columns use different algorithms.\
  Red crosses are ground truth eigenvalues. Optimized DMD uses only 1 trajectory.}
\end{figure}

\begin{figure}[!bht]
  \centering
  \includegraphics[width=0.7\textwidth]{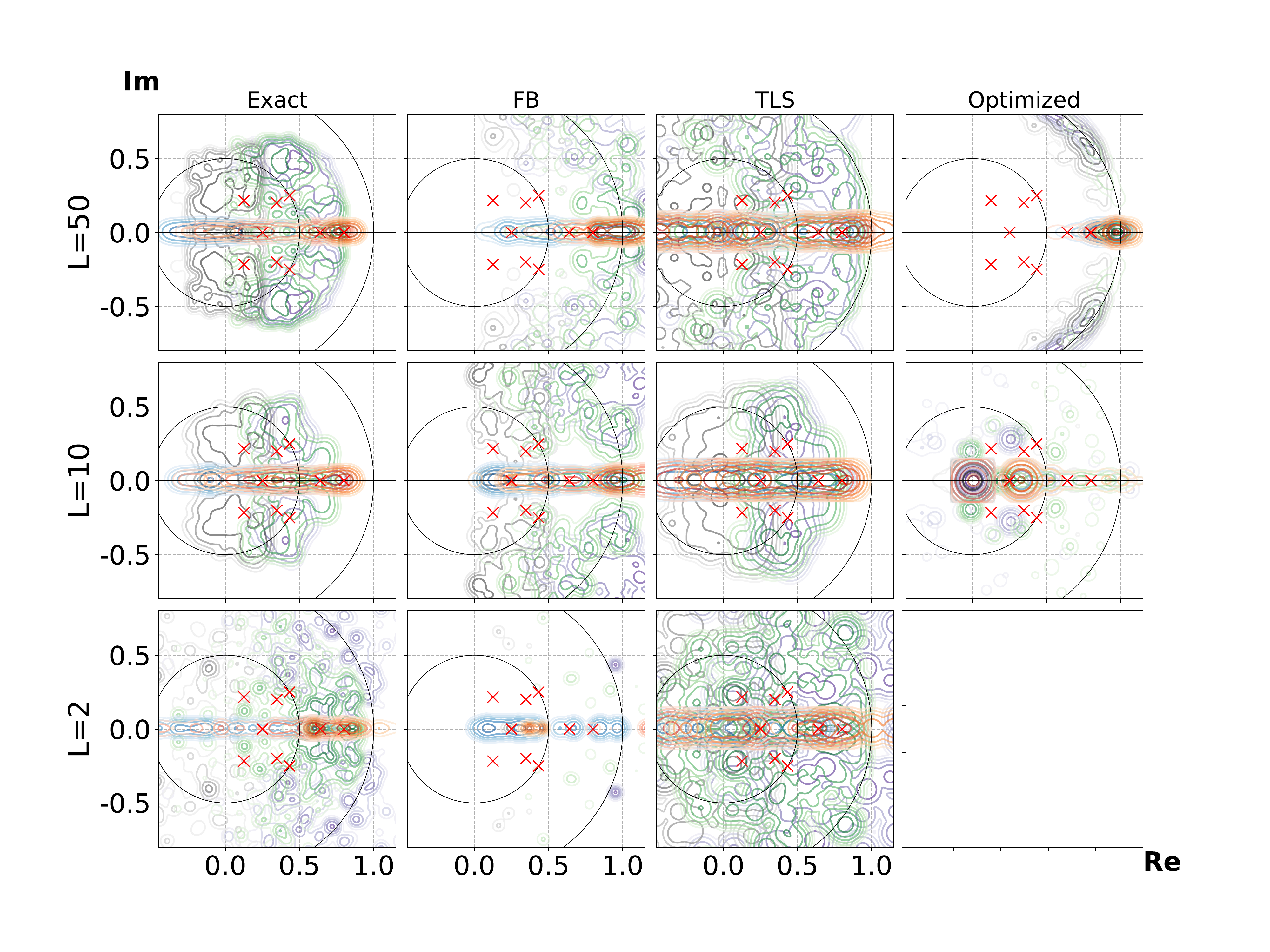}\label{fig:app-4}
  \vspace{-0.3in}
  \caption{Eigenvalue density plot with monomial observables up to 2nd order.\
  System $A$ matrix is constructed by $\theta=1.4, \phi=0, s=0.1$.\
  Different rows have different trajectory length, and different columns use different algorithms.\
  Red crosses are ground truth eigenvalues. Optimized DMD uses only 1 trajectory.}
\end{figure}

\newpage
\subsection{5D system}
We present a 5D system with spectrum more reminiscent of multi-legged systems, shown in \refFig{app-5}. 

\begin{figure}[!b]
  \centering
  \includegraphics[width=0.7\textwidth]{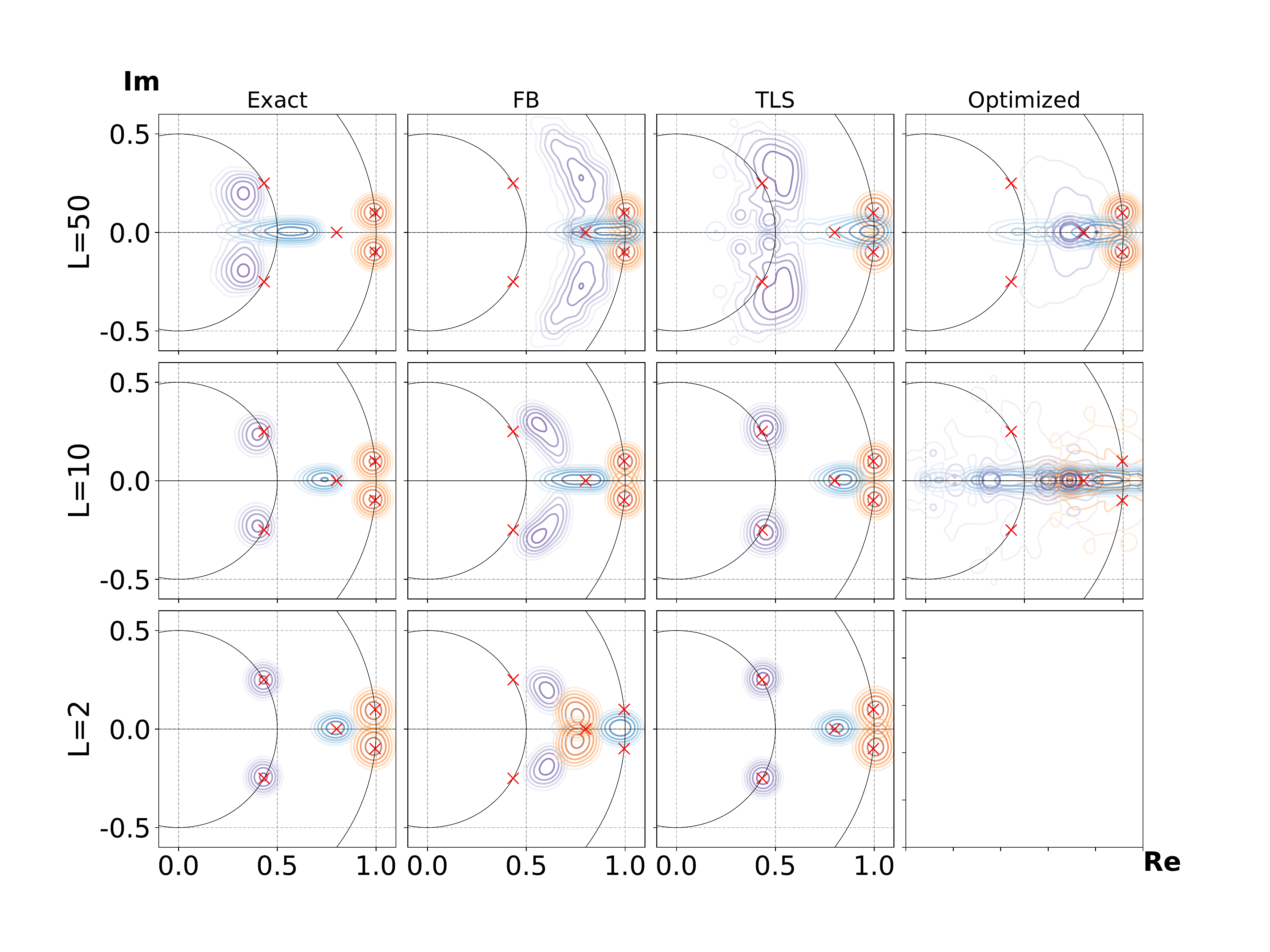}
  \vspace{-0.3in}
  \caption{Eigenvalue density plot with two slow oscillatory modes added (5d system with linear observation). \
  Different rows have different trajectory length, and different columns use different algorithms.\
  Red crosses are ground truth eigenvalues.}\label{fig:app-5}
\end{figure}

\end{document}